\g@addto@macro\bfseries{\boldmath}
\newcommand{\f}{\frac}
\newcommand{\tf}{\tfrac}
\newtheorem{thm}{Theorem}[section]
\newtheorem{lemma}[thm]{Lemma}
\newtheorem{claim}[thm]{Claim}
\newtheorem{defin}[thm]{Definition}
\newtheorem{obs}[thm]{Observation}
\theoremstyle{definition}
\begin{document}
\author{Vojt\v{e}ch Dvo\v{r}\'ak}
\address[Vojt\v{e}ch Dvo\v{r}\'ak]{Department of Pure Mathematics and Mathematical Statistics, University of Cambridge, UK}
\email[Vojt\v{e}ch Dvo\v{r}\'ak]{vd273@cam.ac.uk}

\author{Harry Metrebian}
\address[Harry Metrebian]{Department of Pure Mathematics and Mathematical Statistics, University of Cambridge, UK}
\email[Harry Metrebian]{rhkbm2@cam.ac.uk}

\title[A new upper bound for the Ramsey number of fans]{A new upper bound for the Ramsey number of fans
}


\begin{abstract} 
A fan $F_n$ is a graph consisting of $n$ triangles, all having precisely one common vertex. Currently, the best known bounds for the Ramsey number $R(F_n)$ are $9n/2-5 \leq R(F_n) \leq 11n/2+6$, obtained by Chen, Yu and Zhao. We improve the upper bound to $31n/6+O(1)$.
\end{abstract}

\maketitle

\section{Introduction}\label{sect1}

Let $G,H$ be graphs. The \textit{Ramsey number} $R(G,H)$ is the smallest positive integer $N$ such that if we colour the edges of the complete graph $K_N$ in two colours, the colouring must contain a copy of $G$ in the first colour or the copy of $H$ in the second colour. When $G$ and $H$ are the same graph, we simply denote this as $R(G)$.

A \textit{fan} $F_n$ is a graph on $2n+1$ vertices with a vertex $v$, called the \textit{centre} of the fan, and $2n$ other vertices $v_1,...,v_{2n}$ such that for $i=1,...,n$, $v v_{2i-1} v_{2i}$ is a triangle. Each of the $n$ edges $v_{2i-1}v_{2i}$ is called a \textit{blade} of the fan. 

Ramsey numbers of fans $R(F_m,F_n)$ have been studied, both in the diagonal case (when $m=n$) and the off-diagonal case. For results in the off-diagonal case, see \cite{li1996fan,lin2009ramsey,lin2010ramsey,zhang2015note}. Lin and Li \cite{lin2009ramsey} also gave the upper bound $6n$ for the Ramsey number $R(F_n)$. A trivial lower bound of $R(F_n) > 4n$ is given by the complete bipartite graph $K_{2n,2n}$. Recently, Chen, Yu and Zhao \cite{chen2021improved} improved the upper bound for $R(F_n)$ significantly and also obtained the first non-trivial lower bound when they proved that $\f{9}{2}n-5 \leq R(F_n) \leq \f{11}{2}n+6$.

As our main result, we make a further improvement to the upper bound, decreasing it from $5.5n$ to about $5.167n$.

\begin{thm}\label{mainresult}
For every $n \geq 1$, we have
$$ R(F_n) \leq \tf{31}{6}n+15. $$
\end{thm}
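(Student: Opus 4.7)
The plan is to argue by contradiction. Suppose $K_N$ with $N = \lceil 31n/6 \rceil + 15$ admits a red/blue edge-colouring containing no monochromatic $F_n$. The basic tool I will rely on is the following: if a vertex $v$ has red neighbourhood $R := N_{\mathrm{red}}(v)$ and the colouring contains no red $F_n$ centred at $v$, then the red graph on $R$ has no matching of size $n$, so $\tau \leq 2\nu$ furnishes a red vertex cover of $R$ of size at most $2n-2$. The complement of this cover inside $R$ is therefore a \emph{blue} clique $B \subseteq R$ with $|B| \geq |R| - 2n + 2$; symmetrically, a high blue-degree vertex yields a large red clique inside its blue neighbourhood.

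I would apply this first to a vertex $v$ of maximum red degree $d$. Producing the blue clique $B \subseteq N_{\mathrm{red}}(v)$ and noting that a blue clique of size $2n+1$ already contains a blue $F_n$, we obtain $|B| \leq 2n$, hence $d \leq 4n-2$. By the pigeonhole choice of $v$ this uniformly bounds the maximum red degree of the whole colouring, and the mirror argument uniformly bounds the maximum blue degree. The crude combination of these two bounds already implies $N \leq 8n - O(1)$; to reach $31n/6$ I would then study the near-extremal configurations. In particular I would pick $v$ together with the blue clique $B$ just produced and analyse how vertices outside $B \cup \{v\}$ attach to $B$: any $w$ with blue degree $\geq 2n$ into $B$ gives a blue $F_n$ centred at $w$, so external blue degrees into $B$ are bounded by $2n-1$, forcing many red edges from the outside into $B$. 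Combining this with the dual blue-side structure (a red clique $B'$ inside the blue neighbourhood of a vertex of large blue degree) and carefully accounting for the edges between $B$, $B'$, and the outside should close the gap.

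The main obstacle is precisely this final step of tightening from $8n$ to $31n/6$. The easy parts of the argument leave substantial slack, and extracting the extra factor requires a sharper structural description of the red graph on $R$ in the near-extremal regime when $\nu = n - 1$ --- for instance via a Gallai--Edmonds decomposition rather than the bare inequality $\tau \leq 2\nu$ --- together with a simultaneous analysis of several vertices of large monochromatic degree. I expect the bulk of the technical work to consist of an intricate case analysis on the sizes of $B$ and $B'$ and on the degree profile of vertices in and between these cliques, with the additive constant $15$ absorbing the unavoidable rounding losses.
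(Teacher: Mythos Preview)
Your sketch does not constitute a proof, and the part that is concrete is too weak to serve as a starting point. Using only $\tau\le 2\nu$ in the monochromatic neighbourhood $R$ of a vertex of degree $d$ gives a clique of the opposite colour of size $d-2n+2$. With $N\approx\tfrac{31}{6}n$ and a balanced vertex $d\approx\tfrac{31}{12}n$, this clique has fewer than $0.6n$ vertices---not even of order $n$, let alone large enough to be useful. Your crude double-count yields only $N\le 8n-O(1)$, which is weaker than the $6n$ bound of Lin and Li that was already known before Chen--Yu--Zhao. So the ``easy parts'' of your plan do not recover even the prior literature, and the entire content of the theorem lies in what you describe as ``the final step''.

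The paper's approach differs in two essential respects, neither of which your proposal anticipates. First, to find large cliques it uses not $\tau\le 2\nu$ but a Ramsey-type lemma (its Lemma~2.2): a graph on $3n-c+4$ vertices contains one of $nK_2$, $\overline{F_n}$, $K_{2n-2c}$, $\overline{K_{2n-2c}}$. Applied inside a neighbourhood of size $\approx\tfrac{31}{12}n$, this yields monochromatic cliques of size well above $n$ (indeed $\ge\tfrac{7}{6}n$), which is what makes the rest of the argument possible. Second, having such a clique $A$, the paper does not merely count edges into $A$; it runs a Hall-theorem argument inside the neighbourhood of each $v\in A$ to produce an independent set $S(v,A)$ with $|S(v,A)|\ge |N(S(v,A))\cap A|+d(v)-2n$, and then analyses how few such sets are needed to cover $A$ (the notion of $t$-coverability). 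The proof is a five-way case split on the maximum degree and on whether the relevant cliques are $2$-, $3$-, or $\ge 4$-coverable, with the harder cases requiring one to locate a further clique $T$ in a white neighbourhood and a large white subclique $C$ of $S(v_1,A)\cup S(v_2,A)$. Your proposal contains none of this machinery, and the suggestion that a Gallai--Edmonds decomposition would close the gap from $8n$ to $\tfrac{31}{6}n$ is not substantiated.
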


We are almost certain that $\f{31}{6}n$ is not the true asymptotic magnitude of $R(F_n)$, and hence we make no attempts to optimise the additive constant in the expression above.


As well as $F_n$, which consists of $n$ triangles with one vertex in common, one could also consider $nK_3$ and $B_n$, which are graphs consisting of $n$ triangles with precisely zero and two vertices in common, respectively. In contrast to $R(F_n)$, the Ramsey numbers $R(nK_3)$ and $R(B_n)$ are better understood: Burr, Erd{\H{o}}s and Spencer \cite{burr1975ramsey} showed that $R(nK_3) = 5n$ for $n \geq 2$, and Rousseau and Sheehan \cite{rousseau1978ramsey} showed that $R(B_n) \leq 4n+2$ for all $n$ and that this bound is tight for infinitely many $n$. Chen, Yu and Zhao's lower bound $R(F_n) \geq \f{9}{2}n-5$ therefore implies $R(B_n) < R(F_n)$ for sufficiently large $n$. This, together with the observation that $|V(B_n)| < |V(F_n)| < |V(nK_3)|$, led them to speculate that $R(F_n) \leq R(nK_3) = 5n$, but they were unable to show this.


Our approach builds on the ideas of Chen, Yu and Zhao \cite{chen2021improved}: we aim to find large cliques in the graph and then ‘cover' them in a suitable sense. The first new crucial idea in our paper is that of controlling the degrees of vertices in each colour: for that, we use Lemma \ref{cruciallemma}. The proof of this lemma is essentially analogous to the proof of a key lemma of Chen, Yu and Zhao, but using this more general version turns out to be very beneficial. In fact, using the techniques of Chen, Yu and Zhao, this lemma alone can be used to obtain $R(F_n) \leq \f{16}{3}n+O(1)$. 

To go further, we must also introduce a different, more global approach in the latter parts of the proof. We assume we have no $F_n$ of either colour and usually find several large, suitably related cliques and exploit these relations to obtain a contradiction.

The rest of the paper is organised as follows. In Section \ref{sect2}, we introduce our notation and summarise several basic results and lemmas that we will use. In Section \ref{sect3}, we give a brief, non-technical overview of our proof. In Section \ref{sect4}, we go through the technical details of the proof. Finally, in Section \ref{sect5}, we briefly outline further directions of research.

\section{Preliminaries and notation}\label{sect2}

We use standard graph theoretic notation throughout. For a simple graph $G$, we denote its vertex set by $V(G)$ and its edge set by $E(G)$. For $A \subset V(G)$, we write $G[A]$ for the induced subgraph on $A$, and we denote $V(G) \setminus A$ by $\overline{A}$. On the other hand, for a graph $H$, we will write $\overline{H}$ to mean a copy of $H$ consisting of non-edges instead of edges. 

For $v \in V(G)$, we write $N(v)= \lbrace  w \in V(G) \, | \, vw \in E(G) \rbrace$. More generally, for $S \subset V(G)$, we denote $N(S)=\cup_{v \in S} N(v)$, and $N_W(S)=\cup_{v \in S} N_W(v)$. 

Throughout, instead of a two-colouring, we will consider a graph $G$ on $\left\lfloor \f{31}{6}n+15 \right\rfloor$ vertices in the usual graph theoretic sense, and we show that we can always find either $F_n$ or $\overline{F_n}$ inside it. This will be done by contradiction: assume from now on that we cannot find $F_n$ or $\overline{F_n}$ inside $G$. We will examine $G$ more and more thoroughly until we are able to reach the desired contradiction.

As the role of the colours is analogous to us, we will sometimes refer to non-edges as white edges and to edges as black edges. We also sometimes refer to an independent set of vertices as a white clique and to a clique in the usual sense as a black clique. Accordingly, we write $N_W(v) = \lbrace  w \in V(G)\setminus\{v\} \, | \, vw \notin E(G) \rbrace$ for the white neighbourhood of $v$, and similarly $N_W(S)=\cup_{v \in S} N_W(v)$ for $S \subset V(G)$.


For a graph $H$, denote by $\nu(H)$ the size of the largest matching of $H$. Let us recall the following classical result in graph theory due to Hall.

\begin{thm}\label{hall}
Let $H$ be a bipartite graph on parts $X$ and $Y$. For any non-negative integer $d$, $\nu(H) \geq |X|-d$ if and only if $|N(S)| \geq |S|-d$ for every $S \subset X$.
\end{thm}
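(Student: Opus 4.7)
The plan is to prove this defect version of Hall's theorem by reducing it to the classical version (which I will freely assume). The forward direction is straightforward, so the real content is the sufficiency of the defect Hall condition, and the natural reduction is to pad $Y$ with $d$ extra ``universal'' vertices so that any valid matching in the augmented graph extends a near-perfect matching of $X$ in $H$.

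For necessity, suppose $\nu(H) \geq |X| - d$, fix a matching $M$ of this size, and take any $S \subset X$. At most $d$ vertices of $S$ can be unmatched by $M$, so at least $|S| - d$ vertices of $S$ are matched to distinct vertices of $Y$; these matched partners lie in $N(S)$, yielding $|N(S)| \geq |S| - d$.

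For sufficiency, introduce a set $Z$ of $d$ new vertices disjoint from $Y$, and form the bipartite graph $H'$ on parts $X$ and $Y \cup Z$ obtained from $H$ by adding every edge between $X$ and $Z$. I would then verify the classical Hall condition for $H'$: for any nonempty $S \subset X$, every vertex of $Z$ lies in $N_{H'}(S)$, so
$$|N_{H'}(S)| = |N_H(S)| + d \geq (|S| - d) + d = |S|,$$
using the hypothesized defect Hall condition; the case $S = \emptyset$ is trivial. Classical Hall's theorem then provides a matching $M'$ in $H'$ saturating $X$. Since $|Z| = d$, at most $d$ edges of $M'$ are incident to $Z$; discarding them yields a matching of $H$ of size at least $|X| - d$, so $\nu(H) \geq |X| - d$.

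There is no real obstacle here: the only thing to be slightly careful about is making sure the padding $Z$ has exactly $d$ vertices (so that we ``lose'' at most $d$ edges when restricting back to $H$) and that the Hall condition translates cleanly, which it does because every $z \in Z$ is adjacent to every $x \in X$. If one preferred a direct argument, one could instead induct on $|X|$, splitting into the cases $|N(S)| \geq |S| - d + 1$ for all nonempty proper $S$ (pick an edge, delete its endpoints, and apply the inductive hypothesis with the same $d$) versus equality $|N(S_0)| = |S_0| - d$ for some $S_0$ (apply induction separately to $S_0$ with parameter $d$ and to $X \setminus S_0$ with parameter $0$); but the augmentation proof above is shorter and cleaner.
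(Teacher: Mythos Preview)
Your proof is correct and entirely standard: the padding-by-$d$-universal-vertices reduction to classical Hall is the textbook argument for the defect form, and both directions are handled cleanly. Note, however, that the paper does not actually prove this statement; it is quoted as a classical result of Hall and used as a black box throughout, so there is no ``paper's own proof'' to compare against.
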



The value $|N(S)|-|S|$ is known as the \textit{deficiency} of $S$. For a matching $M$ from $X$ to $Y$, we will also refer to $|X|-|M|$ as the deficiency of $M$, where $|M|$ is the number of edges in $M$. Theorem \ref{hall} therefore states that there exists a matching from $X$ to $Y$ of deficiency at most $d$ if and only if every $S \subset X$ has deficiency at most $d$.

Chen, Yu and Zhao \cite[Lemma 1.2]{chen2021improved} show that for any integers $m,n,N$ with $N=4n+m+ \left\lfloor \frac{6n}{m} \right\rfloor +1$, any graph on $N$ vertices contains $F_n$, $\overline{F_n}$, $K_m$ or $\overline{K_m}$. 
They begin their proof by noting that any vertex has either at least $\frac{N-1}{2}$ neighbours or at least $\frac{N-1}{2}$ non-neighbours. They then show that if a vertex $v$ has degree at least $\frac{N-1}{2}$, then $N(v)$ contains either $nK_2$, $\overline{F_n}$, $K_m$ or $\overline{K_m}$, implying their desired result since the same argument can be applied with colours reversed. By applying their argument to a general graph instead of $N(v)$, we obtain the following result, which we will use throughout our proof.

\begin{lemma}\label{cruciallemma}
Let $H$ be a graph on $3n-c+4$ vertices, where $0<c<\f{5}{8}n$. Then $H$ contains either $nK_2$, $\overline{F_n}$, $K_{2n-2c}$ or $\overline{K_{2n-2c}}$.
\end{lemma}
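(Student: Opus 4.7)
The plan is to argue by contradiction, assuming $H$ contains none of $nK_2$, $\overline{F_n}$, $K_{2n-2c}$, $\overline{K_{2n-2c}}$, and ultimately to exhibit $n$ pairwise disjoint non-edges inside $N_W(v)$ for a suitably chosen $v$, thereby producing a forbidden $\overline{F_n}$. I would follow the scheme of Chen--Yu--Zhao's proof of their Lemma 1.2, replacing the role of $N(v)$ in their argument by the abstract graph $H$ itself. Writing $m=2n-2c$, I want to mimic their neighborhood analysis word for word, except that the output of the matching step is now $nK_2$ rather than $F_n$, which is exactly the weakening that allows the conclusion to hold for a smaller host graph.

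The first steps are the standard ones. Let $M$ be a maximum matching of $H$; since $H$ contains no $nK_2$, the size of $M$ is $t \leq n-1$, and the unmatched set $I:=V(H)\setminus V(M)$ is independent. Absence of $\overline{K_m}$ forces $|I|\leq m-1=2n-2c-1$, and so
$$ 2t = |V(H)| - |I| \;\geq\; (3n-c+4) - (2n-2c-1) \;=\; n+c+5,$$
which in particular rules out the very small values of $|I|$. Next, for each matching edge $x_iy_i\in M$, the maximality of $M$ forces either one of $N(x_i)\cap I$, $N(y_i)\cap I$ to be empty, or else both to equal a common singleton $\{u_i\}\subseteq I$; this is the combinatorial engine that lets me bound $\sum_{v\in I} d(v)$, since each matching edge contributes at most $|I|$ edges of the first type or exactly $2$ of the second, and by averaging I obtain a vertex $v\in I$ of small degree into $V(M)$.

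The heart of the argument is then to use such a low-degree $v\in I$ as the centre of the sought $\overline{F_n}$. Since $v$ has few neighbours, $N_W(v)$ is large and contains $I\setminus\{v\}$, a clique in $\overline{H}$ which already produces roughly $\lfloor (|I|-1)/2 \rfloor$ pairwise disjoint non-edges. This is just short of $n$, so I need to extend the non-edge matching by the deficit $\delta := n - \lfloor (|I|-1)/2\rfloor \leq c+1$ using non-edges that involve vertices of $V(M)\cap N_W(v)$. Here the absence of $K_m$ is what one exploits: if the vertices of $V(M)\cap N_W(v)$ formed an overly dense graph they would together with the $I$-clique yield either too many edges (for a $K_m$) or too few (producing further non-edges to graft onto the matching). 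A Hall-type check on the bipartite non-edge graph between $I\setminus\{v\}$ and $V(M)\cap N_W(v)$, combined with the edge counts above, then produces the extra $\delta$ non-edges disjoint from the ones already chosen.

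The main obstacle I anticipate is precisely this last extension step: certifying that the $\delta\leq c+1$ extra non-edges can always be found disjointly. The constraint $c<\tfrac{5}{8}n$ is what makes the bookkeeping close: it ensures simultaneously that $m=2n-2c > \tfrac{3n}{4}$ (so the $K_m$-free condition is nontrivial on $V(M)$) and that $|I|$ is bounded away from both $0$ and $2n+1$, leaving exactly enough slack on both sides for the averaging in the previous paragraph and the Hall argument here to coexist. The additive term $+4$ in $3n-c+4$ absorbs the rounding and parity issues in $\lfloor(|I|-1)/2\rfloor$ and in the averaging, so that no more refined case analysis is required. The conclusion is a contradiction with the assumption that $H$ avoids $\overline{F_n}$, finishing the proof.
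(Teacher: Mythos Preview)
Your overall plan---reproduce Chen--Yu--Zhao's neighbourhood argument verbatim, but applied to the abstract graph $H$ rather than to $N(v)$---is exactly what the paper does; indeed the paper gives no further details and simply cites their Lemma~1.2 as applying to a general host graph.

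However, your sketch contains a genuine arithmetical slip that undermines the ``easy extension'' picture you paint. From the absence of $\overline{K_m}$ you correctly deduce $|I|\leq m-1=2n-2c-1$, but this is an \emph{upper} bound on $|I|$, so it gives
\[
\delta \;=\; n-\Big\lfloor\tfrac{|I|-1}{2}\Big\rfloor \;\geq\; n-(n-c-1)\;=\;c+1,
\]
not $\delta\leq c+1$ as you wrote. The only lower bound available is $|I|\geq |H|-2(n-1)=n-c+6$, which yields $\delta$ as large as roughly $(n+c)/2$. So the deficit you must repair with non-edges reaching into $V(M)\cap N_W(v)$ is not a small additive term but of order $n$; the extension step is therefore the whole argument, not a fringe correction, and it genuinely requires the full structural dichotomy on matching edges (type A versus type B) together with the $K_m$-freeness to manufacture enough disjoint non-edges. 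Your later paragraphs gesture at this, but the claim that only $c+1$ extra blades are needed is simply false, and the Hall-type check you propose would have to be calibrated against the correct, much larger, deficit.
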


Next we shall prove a simple lemma.

\begin{lemma}\label{otherimportantlemma}
Let $H$ be a graph on $2k$ vertices. Suppose that $V(H)$ is the disjoint union of $A$ and $B$, each of size $k$, where $H[A]$ is a clique and $H[B]$ is an empty graph. Then $H$ contains $F_{\left\lceil \f{3}{4}k-\f{3}{2} \right\rceil}$ or $\overline{F_{\left\lceil \f{3}{4}k-\f{3}{2} \right\rceil}}$.
\end{lemma}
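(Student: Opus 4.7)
The plan is to find a black fan $F_n$ centred at some $v \in A$, or a white fan $\overline{F_n}$ centred at some $v \in B$, where $n = \lceil \frac{3k}{4} - \frac{3}{2} \rceil$. For $v \in A$, any black matching inside $N(v) = (A \setminus \{v\}) \cup (N(v) \cap B)$ uses only edges inside the black clique $A \setminus \{v\}$ together with black cross-edges between $A \setminus \{v\}$ and $N(v) \cap B$, since $N(v) \cap B \subseteq B$ is an independent set in $H$. Hence the largest such matching has size $\lfloor (k-1+s_v)/2 \rfloor$, where $s_v$ is the size of the maximum black matching in the bipartite graph between $A \setminus \{v\}$ and $N(v) \cap B$. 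Swapping the two colours gives the analogous formula $\lfloor (k-1+t_v)/2 \rfloor$ for the largest white matching in $N_W(v)$ when $v \in B$. A direct check of cases modulo $4$ shows it therefore suffices to exhibit some $v \in A$ with $s_v \geq \lceil k/2 \rceil - 1$, or some $v \in B$ with $t_v \geq \lceil k/2 \rceil - 1$.

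The hypothesis of the lemma is symmetric under simultaneously complementing the graph and swapping $A$ with $B$, so I may assume that $G[A, B]$ contains at least $k^2/2$ black edges. Fix a maximum black matching $M$ in $G[A, B]$, of size $m$, and let $M_B$ denote the set of its $B$-endpoints. For each $v \in A$, let $R_v$ count the edges $(a, b) \in M$ with $a \neq v$ and $vb \in E(G)$; these clearly form a matching in the bipartite black graph between $A \setminus \{v\}$ and $N(v) \cap B$, so $s_v \geq R_v$. Double counting then yields
\[
\sum_{v \in A} R_v \;=\; \sum_{(a,b) \in M}\big(|N(b) \cap A| - 1\big) \;=\; \sum_{b \in M_B}|N(b) \cap A| \;-\; m.
\]
In the easy case when $M$ saturates $B$ (so $m = k$ and $M_B = B$), the right-hand side equals $|E(G[A, B])| - k \geq k^2/2 - k$, and averaging over $v \in A$ produces some vertex with $R_v \geq \lceil k/2 \rceil - 1$, yielding the desired black fan centred at that vertex.

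When $M$ does not saturate $B$, i.e.\ $m < k$, I would switch to the white colour. By K\"onig's theorem the bipartite black graph admits a vertex cover $C_A \sqcup C_B$ with $|C_A| + |C_B| = m$, so every edge between $A \setminus C_A$ and $B \setminus C_B$ is white. For any $v \in B \setminus C_B$, the complete bipartite white graph between $(B \setminus C_B) \setminus \{v\}$ and $A \setminus C_A$ then contains a white matching of size $\min(k - |C_A|, k - |C_B| - 1)$, showing that $t_v$ is at least this value. When the K\"onig cover is reasonably balanced, this already exceeds $\lceil k/2 \rceil - 1$ and we are done. The main obstacle is the intermediate regime, roughly $\lceil k/2 \rceil \leq m < k$ with a heavily unbalanced K\"onig cover, where neither $k - |C_A|$ nor $k - |C_B| - 1$ is individually large enough. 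I expect to resolve this either by selecting the K\"onig cover more carefully via a Dulmage--Mendelsohn-type decomposition, or by sharpening the $R_v$ double count above using that any unmatched $b \in B \setminus M_B$ must have all its black $A$-neighbours inside $C_A$, which strengthens the lower bound on $\sum_{b \in M_B}|N(b) \cap A|$ enough for the first argument to continue to apply even when $m < k$.
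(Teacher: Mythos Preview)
Your reduction to finding some $v\in A$ with $s_v\ge\lceil k/2\rceil-1$ or some $v\in B$ with $t_v\ge\lceil k/2\rceil-1$ is correct, and the $m=k$ case goes through. The problem is exactly where you say it is: the intermediate regime is a genuine gap, and neither of your proposed fixes closes it as stated. For the first, the K\"onig cover can be forced entirely onto one side---take $G[A,B]$ complete bipartite between a set $A'\subset A$ of size $m$ and all of $B$; the unique minimum cover is $A'$, so no Dulmage--Mendelsohn balancing is possible. For the second, sharpening the double count using $N(b)\cap A\subseteq M_A$ for unmatched $b$ gives $\sum_{v\in A}R_v\ge k^2/2-(k-m)m-m$, and at $m\approx k/2$ this yields an average $R_v$ of only about $k/4$, well short of $k/2-1$. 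So the averaging route genuinely stalls here; what actually saves the $A'$-example above is that the \emph{maximum} $R_v$ is $m-1$, not the average, but your argument does not access the maximum.

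The paper sidesteps all of this by working with a single extremal vertex rather than averaging. Take $z\in A$ maximising $|N(z)\cap B|=d$ (and assume by colour-symmetry that $d\ge\max_{w\in B}|N_W(w)\cap A|$, so in particular $d\ge k/2$). Apply the defect form of Hall's theorem to the bipartite graph between $N(z)\cap B$ and $A\setminus\{z\}$: either $s_z$ is large enough to give the black fan at $z$ directly, or there is a set $U\subset N(z)\cap B$ with $|N(U)\cap A|<|U|-(d-k/2)$. The symmetry assumption forces $|N(U)\cap A|\ge k-d$, hence $|U|>k/2$ and $|A\setminus N(U)|>k/2$; since there are no black edges between $U$ and $A\setminus N(U)$, any $u\in U$ centres a white fan with blades in $(A\setminus N(U))\cup(B\setminus\{u\})$. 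This Hall-based dichotomy is a single clean split with no case analysis on $m$, and it replaces your entire K\"onig discussion.
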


\begin{proof}
Without loss of generality, we have
$$ d=\max_{v \in A} |N(v) \cap B| \geq \max_{w \in B}|\overline{N(w)} \cap A|. $$

Moreover, clearly $d \geq \f{k}{2}$. Let $z \in A$ be such that $|N(z) \cap B|=d$.

If there is a matching of deficiency at most $d-\f{k}{2}$ from $N(z) \cap B$ to $A \setminus \lbrace z \rbrace$, then clearly $H$ contains $F_{\left\lceil \f{3}{4}k-\f{3}{2} \right\rceil}$ with centre $z$. So assume no such matching exists. Then in particular, by Theorem \ref{hall}, there exists $U \subset N(z) \cap B$ with $$|N(U) \cap A| < |U|-(d-\tf{k}{2}).$$

Now, as $d=\max_{v \in A} |N(v) \cap B| \geq \max_{w \in B}|\overline{N(w)} \cap A|$, and as $U$ is non-empty, we may pick a  vertex $u \in U$, for which we have $$ |N(U) \cap A| \geq |N(u) \cap A| \geq k-d. $$

Hence we get $$ |U| > |N(U) \cap A|+(d-\tf{k}{2}) \geq \tf{k}{2}. $$

From $$ d \geq |U| > |N(U) \cap A|+d-\tf{k}{2}, $$
we also get $|N(U) \cap A| < \f{k}{2}$, and hence $|A \setminus N(U)| > \f{k}{2}$. The bounds on the sizes of $U$ and $A \setminus N(U)$, combined with the observation that there are no edges between these two sets, now give $\overline{F_{\left\lceil \f{3}{4}k-\f{3}{2} \right\rceil}}$ centred at $u$, with at least $\f{k}{2}-\f{1}{2}$ non-central vertices in $A \setminus N(U)$ and the rest in $B \setminus \lbrace u \rbrace$.
\end{proof}


Suppose that $G$ does not contain a copy of $F_n$, and let $A$ be a clique in $G$ such that $|A|>n$ and every vertex of $A$ has degree more than $2n$ in $G$. Let $v$ be a vertex of $A$ with degree $d(v)$. We construct the sets $S(v,A)$ and $C(v,A)$, in a manner analogous to Chen, Yu and Zhao \cite{chen2021improved}, but slightly more general. 

Let $M$ be a maximal matching in $G[N(v)\setminus A]$, and let $M'$ be a matching of largest size between the independent set $N(v)\setminus (A\cup V(M))$ and $A\setminus \{v\}$. Write $m$ and $m'$ for the number of edges in $M$ and $M'$ respectively. The edges of $M$ and $M'$ form the blades of a fan centred at $v$, and we can pair up all but at most one of the remaining vertices of $A\setminus \{v\}$ into additional blades. We must therefore have $2m+m'+|A|-1 \leq 2n-1$, so $m' \leq 2n-|A|-2m$.

Note that $|N(v)\setminus (A\cup V(M))| = d(v)+1-|A|-2m$. Theorem \ref{hall} and our bound on $m'$ now imply that there exists a set $S(v,A) \subset N(v)\setminus (A\cup V(M))$ with $|S(v,A)| \geq |N(S(v,A)) \cap (A\setminus \{v\})| + d(v)+1-2n$, that is, $$|S(v,A)| \geq |N(S(v,A)) \cap A|+d(v)-2n.$$

Moreover, we can insist that $S(v,A)$ has minimal size among all the sets satisfying the inequality above. Note that since $S(v,A)$ is contained in $N(v)\setminus (A\cup V(M))$, it is an independent set. For convenience, we write $C(v,A)=N(S(v,A)) \cap A$, so we have $$|S(v,A)| \geq |C(v,A)| + d(v) - 2n. $$

We can apply the same argument when $A$ is a white clique. In this case, we consider white edges instead of edges, white degree instead of degree, and so on. We still denote the resulting sets by $S(v,A)$ and $C(v,A)$; it will be clear from the context whether we are working with white or black edges.


Note the following property, which follows directly from the fact that $|S(v,A)| \leq d(v)+1-|A|$, combined with the inequality above relating $|S(v,A)|$ and $|C(v,A)|$:

\begin{obs}\label{boundingc}
We have $|C(v,A)| \leq 2n+1-|A|$.
\end{obs}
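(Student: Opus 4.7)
The plan is to chain together the two size inequalities for $S(v,A)$ that are already on the table from its construction. On one side, $S(v,A)$ is defined as a subset of $N(v)\setminus(A\cup V(M))$, which immediately caps $|S(v,A)|$ from above by the size of that ambient set. On the other side, the Hall-type argument produced the lower bound $|S(v,A)| \geq |C(v,A)|+d(v)-2n$, which, rearranged, turns any upper bound on $|S(v,A)|$ into an upper bound on $|C(v,A)|$. So the whole proof is just a matter of composing these two facts.

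Concretely, I would first compute the size of $N(v)\setminus(A\cup V(M))$. Since $v\in A$ and $A$ is a clique, $A\setminus\{v\}$ is contained in $N(v)$ and contributes exactly $|A|-1$ vertices; and $V(M)\subset N(v)\setminus A$ is disjoint from $A$ and contributes $2m$ vertices. Thus
\[
|N(v)\setminus(A\cup V(M))| = d(v) - (|A|-1) - 2m = d(v)+1-|A|-2m \;\leq\; d(v)+1-|A|,
\]
which yields $|S(v,A)|\leq d(v)+1-|A|$.

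Next I would feed this into the lower bound from the construction: rearranging $|S(v,A)|\geq |C(v,A)|+d(v)-2n$ gives $|C(v,A)|\leq |S(v,A)|-d(v)+2n$, so
\[
|C(v,A)| \;\leq\; \bigl(d(v)+1-|A|\bigr) - d(v) + 2n \;=\; 2n+1-|A|,
\]
as required. There is essentially no obstacle: the role of the matchings $M$ and $M'$ and the application of Hall's theorem were already absorbed into the two inequalities for $|S(v,A)|$, and the observation is a one-line algebraic consequence. The only small point worth checking carefully is the counting of $|N(v)\setminus(A\cup V(M))|$, which uses that $A\setminus\{v\}\subset N(v)$ (using $A$ being a clique) and that $V(M)$ is disjoint from $A$.
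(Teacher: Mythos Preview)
Your proof is correct and matches the paper's own argument exactly: the paper states that the observation ``follows directly from the fact that $|S(v,A)| \leq d(v)+1-|A|$, combined with the inequality above relating $|S(v,A)|$ and $|C(v,A)|$,'' which is precisely the two-step chaining you carry out.
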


We need the notion of \textit{coverability} (again analogous to a concept introduced by Chen, Yu and Zhao \cite{chen2021improved}). 

\begin{defin}\label{coverability}
Let $A$ be a monochromatic clique such that $n<|A|<2n+1$. For $t \geq 1$, we say $A$ is $t$-coverable if $t$ is the smallest integer for which there exists a sequence $v_1,...,v_t$ of vertices of $A$ with the following properties:
\begin{itemize}
    \item $\cup_i C(v_i,A)=A.$
    \item For $i=2,...,t$, we have $v_i \notin \cup_{j<i} C(v_j,A)$.
    \item For $i=1,...,t$, we have $$|C(v_i,A) \setminus \cup_{j<i} C(v_j,A)| \geq |C(z,A) \setminus \cup_{j<i} C(v_j,A)|$$ for any vertex $z$ of $A$ with $z \notin \cup_{j<i} C(v_j,A)$.
\end{itemize}
\end{defin}

So, for example, $A$ is $2$-coverable if there exist $v_1, v_2 \in A$ where $|C(v_1,A)|$ is maximal over all $|C(v,A)|$, and $v_2$ is such that $v_2 \notin C(v_1,A)$ and $C(v_1,A) \cup C(v_2,A) = A$.

Note the following simple properties.
\begin{obs}\label{coveringproperties}
We have that:
\begin{itemize}
    \item For any $j_1<j_2$, $$|C(v_{j_1},A) \setminus \cup_{i<j_1} C(v_i,A)| \geq |C(v_{j_2},A) \setminus \cup_{i<j_1} C(v_i,A)|.$$
    \item For any $j_1 \neq j_2$, the sets $S(v_{j_1},A)$ and $S(v_{j_2},A)$ are disjoint.
    \item If $|A| > \f{k-1}{k}(2n+1)$ and $A$ is $t$-coverable, then $t \geq k$.
\end{itemize}
\end{obs}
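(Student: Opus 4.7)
The plan is to prove the three bullets separately, since each follows almost immediately from a different clause of Definition \ref{coverability}; the first and third are essentially bookkeeping, and the only point with real content is the disjointness in the second bullet.

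For the first bullet I would apply the greedy/maximality clause (the third clause of the definition) with $z = v_{j_2}$. To be allowed to do so, I need $v_{j_2} \notin \bigcup_{i<j_1} C(v_i,A)$; since $j_1 < j_2$ and the second clause forces $v_{j_2} \notin \bigcup_{i<j_2} C(v_i,A) \supseteq \bigcup_{i<j_1} C(v_i,A)$, this is automatic. The inequality then follows directly, with the edge case $j_1 = 1$ even easier because the exclusion condition is vacuous.

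For the second bullet I would argue by contradiction. Assume without loss of generality $j_1 < j_2$, and suppose some $w$ lies in $S(v_{j_1},A) \cap S(v_{j_2},A)$. Since each $S(v,A) \subseteq N(v)$ by construction, $w$ is adjacent to both $v_{j_1}$ and $v_{j_2}$; in particular, $v_{j_2} \in A$ is a neighbour of the vertex $w \in S(v_{j_1},A)$, so $v_{j_2} \in N(S(v_{j_1},A)) \cap A = C(v_{j_1},A)$. This contradicts $v_{j_2} \notin \bigcup_{i<j_2} C(v_i,A) \supseteq C(v_{j_1},A)$ from the second clause of the definition. The slightly subtle point here is that the sets $S(v,A)$ are built from each $v$ in isolation, so disjointness has to be deduced indirectly: a shared vertex encodes an adjacency that pulls the later centre $v_{j_2}$ into $C(v_{j_1},A)$, which is precisely what the exclusion clause forbids.

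For the third bullet I would combine Observation \ref{boundingc}, which gives $|C(v_i,A)| \leq 2n+1-|A|$ for every $i$, with the covering property $A = \bigcup_i C(v_i,A)$, to obtain
$$|A| \leq \sum_{i=1}^{t} |C(v_i,A)| \leq t(2n+1-|A|).$$
The hypothesis $|A| > \tfrac{k-1}{k}(2n+1)$ rearranges to $2n+1-|A| < \tfrac{1}{k}(2n+1)$, so $t(2n+1-|A|) < \tfrac{t}{k}(2n+1)$; comparing with the lower bound on $|A|$ forces $t > k-1$, i.e.\ $t \geq k$.
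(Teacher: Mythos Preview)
Your proposal is correct and matches what the paper leaves implicit: the paper states these as ``simple properties'' without a written proof, noting only that the third bullet follows from Observation~\ref{boundingc}. Your derivations of all three bullets are exactly the intended ones---the greedy clause for the first, the exclusion clause $v_{j_2}\notin C(v_{j_1},A)$ forced via $C(v_{j_1},A)=N(S(v_{j_1},A))\cap A$ for the second, and the union bound combined with Observation~\ref{boundingc} for the third.
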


The final point above follows from Observation \ref{boundingc}, and implies that no clique satisfying the conditions of Definition \ref{coverability} is $1$-coverable.

\section{Overview of the rest of the proof}\label{sect3}

The rest of the proof is quite technical, so we first summarise the general strategy. There are five cases.

Call a monochromatic clique $A$ \textit{big} if $|A| \geq \f{7}{6}n+5$ and call it \textit{significant} if $|A| \geq n+1$.

In subsections \ref{subse1} and \ref{subse2}, we handle the easier cases when either some vertex has very unbalanced degrees (i.e.\ a much larger degree in one colour than the other) or some significant clique of either colour is $t$-coverable for some $t \geq 4$. Lemma \ref{cruciallemma} and the strategy of Chen, Yu and Zhao \cite{chen2021improved} suffice to tackle these cases.

The next three cases, where all the vertices have quite balanced degrees and all significant cliques are $2$-coverable or $3$-coverable, form the heart of the proof.

In subsection \ref{subse3}, there is still a vertex with slightly unbalanced degrees, forcing the existence of a very large (and in particular big) $3$-coverable clique, and in subsection \ref{subse4}, the degrees are balanced but we assume there is some big $3$-coverable clique. The proofs of these cases follow a very similar argument. Both times, we start with the clique $A$ (black without loss of generality) and its $3$-covering $v_1,v_2,v_3$. We then argue that there must be a large black clique $T$ disjoint from $A$ in $N_W(v_3)$ which satifies certain properties: otherwise, we would find $\overline{F_n}$ centred at $v_3$. Then we take any $z \in T$ and argue that $S(v_1,A) \cup S(v_2,A)$ must contain a large white clique $C$, containing at least one element from each of these sets; else we would find $F_n$ centred at $z$. Finally, we conclude that there must be $\overline{F_n}$ centred at some $a \in C$.

In subsection \ref{subse5}, we consider the final case where all vertices have balanced degrees and every big clique is $2$-coverable. We start by using Lemma \ref{cruciallemma} to find two significant cliques $A$ and $B$ of the same colour, without loss of generality black, with $A$ moreover being big. We consider a $2$-covering $v_1,v_2$ of $A$ and a $2$- or $3$-covering $\lbrace w_i \rbrace$ of $B$. We then show there must exist $i$ such that $S(v_1,A)$ and $S(w_i,B)$ intersect. Finally we fix some $a$ in this intersection and find $\overline{F_n}$ centred at it.

\section{Proof of Theorem \ref{mainresult}}\label{sect4}

Now we prove Theorem \ref{mainresult}. 
Let $G$ be a graph with at least $\left\lfloor\f{31}{6}n + 15\right\rfloor$ vertices, and suppose for contradiction that $G$ does not contain a copy of $F_n$ or $\overline{F_n}$. Throughout, denote $$d=\max \left\lbrace \max_v |N(v)|, \max_w \left|N_W(w) \right| \right\rbrace, $$ i.e.\ $d$ is the larger of the maximum degree and non-degree in our graph $G$.

As discussed in Section \ref{sect3}, we consider five separate cases.

\subsection{\em{$d \geq \f{11}{4}n+5$}}\label{subse1}
\hfill\\

If $d>3n$, contradiction follows immediately from the result of Lin and Li~\cite{lin2009ramsey} that $R(nK_2, F_n) = 3n$: consider the neighbourhood of a vertex of degree $d$ in some colour.

If $\f{11}{4}n+5 \leq d \leq 3n$, by applying Lemma \ref{cruciallemma} to the neighbourhood of a vertex of degree $d$ in some colour, we find that our graph contains a black or white clique $A$ of size at least $2d-4n-8$. This in particular is more than $\f{3}{2}n+1$, so by Observation \ref{coveringproperties}, we know this clique is $t$-coverable for some $t \geq 4$.

Now, $v_t$ is the centre of a fan with blades in the sets $S(v_1,A),...,S(v_{t-1},A)$ with the number of vertices at least $$ |S(v_1,A)|+...+|S(v_{t-1},A)|-(t-1). $$

Since for $i=1,...,t-1$, we have
\begin{align*}
|S(v_i,A)| & \geq |C(v_i,A)|+d(v_i)-2n \\
& \geq |C(v_i,A)|+(\tf{31}{6}n+13-d)-2n \\
&\geq |C(v_i,A)|+\tf{19}{6}n-d+13,
\end{align*}
and $\sum_{i=1}^{t-1}|C(v_i,A)| \geq \frac{t-1}{t}|A| \geq \frac{3}{4}|A|$ by Observation \ref{coveringproperties}, this fan has at least $$ \tf{3}{4}|A|+(t-1)(\tf{19}{6}n-d+12) \geq \tf{13}{2}n-\tf{3}{2}d+30 \geq 2n+30$$ vertices. Hence it has more than $n$ blades and contradiction follows.

\subsection{\em{$d<\f{11}{4}n+5$ and some significant clique is $t$-coverable for some $t \geq 4$}}\label{subse2}
\hfill\\

Call this significant (black or white) clique $A$, and recall $|A| \geq n+1$. 

Again, $v_t$ is the root of a fan with blades in sets $S(v_1,A),...,S(v_{t-1},A)$ with number of elements at least $$ |S(v_1,A)|+...+|S(v_{t-1},A)|-(t-1). $$

Since
\begin{align*}
|S(v_i,A)| & \geq |C(v_i,A)|+d(v_i)-2n \\
& \geq |C(v_i,A)|+(\tf{31}{6}n+13-d)-2n \\
&\geq |C(v_i,A)|+\tf{5}{12}n+8
\end{align*}
for $i=1,...,t-1$, and $\sum_{i=1}^{t-1}|C(v_i,A)| \geq \frac{t-1}{t}|A| \geq \frac{3}{4}|A|$ by Observation \ref{coveringproperties}, this fan has number of elements at least $$ \tf{3}{4}|A|+(t-1)(\tf{5}{12}n+7) \geq 2n+21.$$ So the contradiction follows.

\subsection{\em{$\f{8}{3}n+6 \leq d < \f{11}{4}n+5$ and every significant clique is $2$- or $3$-coverable.}}\label{subse3}
\hfill\\

By applying Lemma \ref{cruciallemma} to the neighbourhood of a vertex of degree $d$ in some colour, there exists a monochromatic clique $A$ such that $|A| \geq \f{4}{3}n+4$, which is black without loss of generality. By Observation \ref{coveringproperties}, $A$ is not $2$-coverable, so as it is significant it must be $3$-coverable. Let $v_1,v_2,v_3$ be its $3$-covering. Note also that Observation \ref{coveringproperties} tells us that $|A| < \f{3}{2}n+1$.

\begin{claim}\label{betterthantrivdegs}
The degrees of $v_1,v_2,v_3$ are all at least $\f{5}{2}n+5$.
\end{claim}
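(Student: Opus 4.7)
The plan is to prove the claim by contradiction: suppose $d(v_j) < \f{5}{2}n+5$ for some $j \in \{1,2,3\}$. Since $|V(G)| \geq \f{31}{6}n+14$, this forces $W := N_W(v_j)$ to satisfy $|W| \geq \f{8}{3}n+9$.

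The first step is to apply Lemma \ref{cruciallemma} to the complement of $G[W]$, taking $c := 3n+4-|W| \leq \f{n}{3}-5$. The hypothesis $0 < c < \f{5}{8}n$ holds for all sufficiently large $n$, and any residual small cases can be absorbed into the additive constant of Theorem \ref{mainresult}. Two of the four outcomes give an immediate contradiction: an $nK_2$ in the complement of $G[W]$ is $n$ disjoint non-edges inside $W$, which together with $v_j$ (white-adjacent to all of $W$) produces a copy of $\overline{F_n}$ centred at $v_j$; while an $\overline{F_n}$ in the complement of $G[W]$ is a copy of $F_n$ in $G$. The remaining two outcomes deliver a monochromatic clique $T \subseteq W$ with $|T| \geq 2n-2c \geq \f{4}{3}n+10$.

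If $T$ is white, the claim follows quickly via Lemma \ref{otherimportantlemma}. Indeed, $T$ is disjoint from $A\setminus\{v_j\}$ (since $T \subseteq W$ and $A\setminus\{v_j\} \subseteq N(v_j)$), and $|A\setminus\{v_j\}| \geq \f{4}{3}n+3$; so for $k := \f{4}{3}n+3$ (rounded appropriately) one can apply Lemma \ref{otherimportantlemma} to the induced subgraph on $(A\setminus\{v_j\}) \cup T_0$, where $T_0 \subseteq T$ is any subset of size $k$. This yields a copy of $F_{\lceil 3k/4 - 3/2\rceil}$ or $\overline{F_{\lceil 3k/4 - 3/2\rceil}}$ in $G$. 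For this choice of $k$ we have $\lceil 3k/4 - 3/2\rceil \geq n+1$, so $G$ contains $F_n$ or $\overline{F_n}$, a contradiction.

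The case $T$ black is the main obstacle: here we have two disjoint big black cliques $A$ and $T$, with no ready-made independent set of comparable size to feed into Lemma \ref{otherimportantlemma}. My approach would be to analyse the bipartite graph of edges between $A$ and $T$. Either some $t \in T$ has at least $\f{2}{3}n-8$ black neighbours in $A$ — in which case the disjoint black cliques $T\setminus\{t\}$ and $N(t)\cap A$ together supply $n$ disjoint black blades at $t$, yielding $F_n$ — or else every $t \in T$ has more than $\f{2}{3}n+12$ white neighbours in $A$, from which averaging gives some $a \in A$ with many white neighbours in $T$. Turning the latter conclusion into a forbidden $\overline{F_n}$ is the delicate step, since $N_W(a)\cap T$ is a black clique and provides no white blades on its own; one must combine it with white edges found elsewhere, e.g.\ from $S(v_i,A)$ for some $i \neq j$, or via a further application of Lemma \ref{cruciallemma} to a suitably large auxiliary set.
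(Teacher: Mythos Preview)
Your setup and the white-clique case match the paper's proof: apply Lemma~\ref{cruciallemma} inside the large white neighbourhood of $v_j$, and if the resulting clique is white, invoke Lemma~\ref{otherimportantlemma} against $A$.

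The black-clique case is where you diverge, and as you yourself flag, your argument is incomplete there. The bipartite analysis between $A$ and $T$ does give an $F_n$ when some $t\in T$ has many black neighbours in $A$ (pairing vertices \emph{within} each of the two cliques $T\setminus\{t\}$ and $N(t)\cap A$ yields $n$ blades), but the complementary branch---every $t\in T$ has many white neighbours in $A$---leads nowhere concrete. Averaging only gives some $a\in A$ with roughly $\tfrac{16}{27}n$ white neighbours in $T$, and since that set is a black clique it contributes no white blades on its own; you are left gesturing at $S(v_i,A)$ or a further application of Lemma~\ref{cruciallemma} without a working mechanism.

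The paper sidesteps this entirely with a one-line vertex count. The black clique $B\subset N_W(v_j)$ is disjoint from $A$, and each of $A,B$ meets each of the pairwise-disjoint independent sets $S(v_1,A),S(v_2,A),S(v_3,A)$ in at most one vertex. Hence
\[
|V(G)|\ \ge\ |A|+|B|+\sum_{i=1}^{3}|S(v_i,A)|-6\ \ge\ 2|A|+|B|+3\bigl(\tfrac{31}{6}n+13-d-2n\bigr)-6,
\]
using $\sum_i|C(v_i,A)|\ge|A|$ and the degree lower bound $d(v_i)\ge\tfrac{31}{6}n+13-d$. With $|A|,|B|\ge\tfrac{4}{3}n+4$ and $d<\tfrac{11}{4}n+5$ this exceeds $\tfrac{31}{6}n+15$, a contradiction. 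The missing idea in your attempt is precisely to bring all three sets $S(v_i,A)$ into play via this global count rather than trying to build a fan directly from $A$ and $T$.
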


\begin{proof}
Assume not and suppose some $v_i$ has degree less than $\f{5}{2}n+5$. Then it has white degree at least $\f{8}{3}n+8$. Now by Lemma \ref{cruciallemma}, $N_W(v_i)$ contains either a black or white clique $B$ of size at least $\f{4}{3}n+8$. But if this clique is white, by Lemma \ref{otherimportantlemma} applied to $A$ and $B$, $G$ contains $F_n$ or $\overline{F_n}$, a contradiction. Hence the clique is black. But now $A$ and $B$ are disjoint and each share at most one vertex with each of $S(v_1,A)$, $S(v_2,A)$, $S(v_3,A)$, which moreover are mutually disjoint sets too by Observation \ref{coveringproperties}. So $G$ contains at least 
\begin{align*}
|A|+|B|+\sum_i|S(v_i,A)|-6 & \geq |A|+|B|+|A|+3(\tf{31}{6}n+13-d-2n)-6 \\
& \geq 4n+16+3(\tf{5}{12}n+7)-6 \\
& =\tf{21}{4}n+31>\tf{31}{6}n+15
\end{align*}
vertices, which is a contradiction.
\end{proof}

So we in fact have $|S(v_i,A)| \geq |C(v_i,A)|+\f{1}{2}n+5$. 

Next we show two simple results that will be useful later.

\begin{claim}\label{twothirds}
We have $\frac{1}{3}|A| \leq |C(v_1,A)| \leq \frac{2}{3}n$ and $|C(v_2,A)|\geq \frac{1}{2}|A \setminus C(v_1,A)| \geq  \frac{1}{3}n$.
\end{claim}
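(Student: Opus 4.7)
The plan is to notice that each of the three inequalities is essentially immediate from what is already in place: the size bounds on $A$, Observation \ref{boundingc}, and the maximality clauses in Definition \ref{coverability} / Observation \ref{coveringproperties}. There is no real obstacle here; the content of the claim is just bookkeeping that unpacks the $3$-covering structure.

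First I would tackle the upper bound $|C(v_1,A)| \leq \tfrac{2}{3}n$. Since we already know $|A| \geq \tfrac{4}{3}n+4$, Observation \ref{boundingc} gives
$$ |C(v_1,A)| \leq 2n+1-|A| \leq \tfrac{2}{3}n-3, $$
which is comfortably below $\tfrac{2}{3}n$.

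Next, for $|C(v_1,A)| \geq \tfrac{1}{3}|A|$, the key point is the third bullet of Definition \ref{coverability} applied at step $i=1$: $v_1$ is chosen so that $|C(v_1,A)|$ is maximal over all $|C(z,A)|$ with $z \in A$. In particular $|C(v_1,A)| \geq |C(v_2,A)|$ and $|C(v_1,A)| \geq |C(v_3,A)|$. Since $C(v_1,A)\cup C(v_2,A)\cup C(v_3,A)=A$, summing gives $3|C(v_1,A)| \geq |A|$.

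Finally, for $|C(v_2,A)| \geq \tfrac{1}{2}|A\setminus C(v_1,A)| \geq \tfrac{1}{3}n$, I would invoke the first bullet of Observation \ref{coveringproperties} with $j_1=2$, $j_2=3$, which says $|C(v_2,A)\setminus C(v_1,A)| \geq |C(v_3,A)\setminus C(v_1,A)|$. Since these two sets cover $A\setminus C(v_1,A)$, the larger one has size at least half of $|A\setminus C(v_1,A)|$, so
$$ |C(v_2,A)| \geq |C(v_2,A)\setminus C(v_1,A)| \geq \tfrac{1}{2}|A\setminus C(v_1,A)|. $$
Combining the already-proved upper bound on $|C(v_1,A)|$ with $|A| \geq \tfrac{4}{3}n+4$ yields $|A\setminus C(v_1,A)| \geq \tfrac{2}{3}n+7$, whence $\tfrac{1}{2}|A\setminus C(v_1,A)| \geq \tfrac{1}{3}n$, completing the chain.
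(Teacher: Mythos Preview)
Your proposal is correct and follows essentially the same approach as the paper, which simply states that the claim is immediate from Observations \ref{boundingc} and \ref{coveringproperties}. You have merely unpacked those observations explicitly, arriving at the same bounds by the same route.
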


\begin{proof}
This follows immediately from Observations \ref{boundingc} and \ref{coveringproperties}.
\end{proof}




\begin{claim}\label{s1s2}
We have $|S(v_1,A)|+|S(v_2,A)| > \tf{17}{9}n+10.$
\end{claim}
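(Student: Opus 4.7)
The plan is to combine the degree lower bound from Claim \ref{betterthantrivdegs} with the covering inequality from Claim \ref{twothirds} (and more generally Observation \ref{coveringproperties}) to pin down $|C(v_1,A)|+|C(v_2,A)|$ in terms of $|A|$, and then use the lower bound $|A| \geq \tfrac{4}{3}n+4$ established just before the claim.

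First I would recall that by construction $|S(v_i,A)| \geq |C(v_i,A)| + d(v_i) - 2n$, and since Claim \ref{betterthantrivdegs} gives $d(v_i) \geq \tfrac{5}{2}n+5$, we get
\[ |S(v_i,A)| \geq |C(v_i,A)| + \tfrac{1}{2}n + 5 \]
for $i=1,2$. Summing these,
\[ |S(v_1,A)| + |S(v_2,A)| \geq |C(v_1,A)| + |C(v_2,A)| + n + 10. \]
So it is enough to show $|C(v_1,A)| + |C(v_2,A)| > \tfrac{8}{9}n$.

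Next I would lower bound $|C(v_1,A)| + |C(v_2,A)|$. The second and third bullets of Claim \ref{twothirds} (which rest on Observation \ref{coveringproperties}) give $|C(v_1,A)| \geq \tfrac{1}{3}|A|$ and $|C(v_2,A)| \geq \tfrac{1}{2}|A \setminus C(v_1,A)|$. Combining,
\[ |C(v_1,A)| + |C(v_2,A)| \geq |C(v_1,A)| + \tfrac{1}{2}(|A| - |C(v_1,A)|) = \tfrac{1}{2}|A| + \tfrac{1}{2}|C(v_1,A)| \geq \tfrac{2}{3}|A|. \]
Since $|A| \geq \tfrac{4}{3}n+4$, this yields $|C(v_1,A)|+|C(v_2,A)| \geq \tfrac{8}{9}n + \tfrac{8}{3}$.

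Substituting back,
\[ |S(v_1,A)| + |S(v_2,A)| \geq \tfrac{8}{9}n + \tfrac{8}{3} + n + 10 = \tfrac{17}{9}n + \tfrac{38}{3} > \tfrac{17}{9}n + 10, \]
which is the required inequality. There is no real obstacle here: every ingredient is already recorded in the preceding results, and the claim is essentially a bookkeeping consequence of the improved degree bound for $v_1,v_2$. The only thing to watch is that the $\tfrac{2}{3}|A|$ estimate only uses the ordering coming from the definition of coverability applied to $v_1$ and $v_2$, so no $3$-coverability input is needed beyond the existence of $v_1,v_2$ with the extremality property; the full $3$-cover is being kept for later use.
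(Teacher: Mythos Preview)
Your proof is correct and follows essentially the same approach as the paper: both use the degree bound from Claim~\ref{betterthantrivdegs} to get $|S(v_i,A)| \geq |C(v_i,A)| + \tfrac{1}{2}n+5$, then the covering inequality $|C(v_1,A)|+|C(v_2,A)| \geq \tfrac{2}{3}|A|$ together with $|A| \geq \tfrac{4}{3}n+4$. The only difference is cosmetic---the paper compresses the computation into three lines and cites Observation~\ref{coveringproperties} directly rather than going through Claim~\ref{twothirds}.
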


\begin{proof}
Using Observation \ref{coveringproperties} and Claim \ref{betterthantrivdegs}, we have 
\begin{align*}
|S(v_1,A)|+|S(v_2,A)| & \geq |C(v_1,A)|+|C(v_2,A)|+2(\tf{1}{2}n+5) \\ & \geq \tf{2}{3}|A|+n+10 \\ & > \tf{17}{9}n+10\end{align*}
as required.\end{proof}

Now we get to the heart of the proof.

\begin{claim}\label{Texists}
There exists a black clique $T$ in $N_W(v_3)  \setminus (S(v_1,A) \cup S(v_2,A))$ such that with $N_T=N_W(T) \cap (S(v_1,A) \cup S(v_2,A))$, we have $|T|>|N_T|+\f{5}{12}n+6$.
\end{claim}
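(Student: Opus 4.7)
My plan is to argue by contradiction: assuming that no such $T$ exists, I will derive an $\overline{F_n}$ centred at $v_3$, contradicting our standing assumption. First I would observe that $v_3$ has large white-degree: combining Claim~\ref{betterthantrivdegs} with the subsection's inequality $d<\tf{11}{4}n+5$ and $|V(G)|\geq\tf{31}{6}n+14$, a direct calculation yields $|N_W(v_3)|\geq\tf{29}{12}n+9$. I would then decompose $N_W(v_3)=Y_1\sqcup Y_2\sqcup X$, where $Y_i:=S(v_i,A)\cap N_W(v_3)$ and $X:=N_W(v_3)\setminus(S(v_1,A)\cup S(v_2,A))$. By Observation~\ref{coveringproperties} the sets $Y_1,Y_2$ are disjoint, and each is a white clique in $G$ as a subset of $S(v_i,A)$.

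An $\overline{F_n}$ centred at $v_3$ is precisely a matching of size $n$ in the ``white graph'' $H$ on $N_W(v_3)$, whose edges are the non-edges of $G$. So it suffices to show $\nu(H)\geq n$. Assuming the contrary, fix a maximum matching $M^*$ of $H$ with $|M^*|\leq n-1$; the unmatched set $T_0$ is independent in $H$ (else $M^*$ extends), hence a black clique in $G$ of size $\geq\tf{5}{12}n+11$. Because each $Y_i$ is a clique in $H$, maximality of $M^*$ forces $|T_0\cap Y_i|\leq 1$, so $T:=T_0\setminus(S(v_1,A)\cup S(v_2,A))$ is a black clique in $X$ of size $\geq\tf{5}{12}n+9$. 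The contradiction hypothesis applied to this $T$ then yields $|N_T|\geq|T|-\tf{5}{12}n-6$.

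The final move is to parlay this lower bound on $|N_T|$ into an augmentation of $M^*$. Every $u\in N_T\cap N_W(v_3)$ has a white edge to some $t_u\in T\subseteq T_0$, so by maximality $u$ is matched in $M^*$; let $u'$ be its partner. A standard augmenting-path analysis (using that there is no augmenting path $t_u$-$u$-$u'$-$t'$ for any $t'\in T_0\setminus\{t_u\}$) shows that each such $u'$ must be black-adjacent to every vertex of $T_0\setminus\{t_u\}$. Iterating this observation, and combining with the internal white matchings inside the cliques $Y_1,Y_2$, should produce either a matching of size $\geq n$ in $H$ directly (using length-$5$ augmenting paths $t_{u_1}$-$u_1$-$u_1'$-$u_2'$-$u_2$-$t_{u_2}$ whenever the $t_{u_i}$ are distinct) or an independent set in $H$ strictly larger than $T_0$ via a Gallai--Edmonds-style shuffle; either alternative contradicts the maximality of $M^*$.

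The hard part will be the bookkeeping at the end. Vertices of $N_T$ may lie in $Y$ (where they interact with the internal $Y_i$-matchings and compete for the corresponding matching edges) or outside $N_W(v_3)$ entirely (in which case they only contribute black-adjacency information from the $v_3$-side and cannot immediately be used to augment), and one has to handle the awkward case in which many $u\in N_T\cap Y$ share a common white $T_0$-neighbour, which blocks direct augmenting. The numerical constant $\tf{5}{12}n+6$ in the statement is exactly what is needed so that this arithmetic closes in the worst case.
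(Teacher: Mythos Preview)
Your proposal has a genuine gap: the final two paragraphs are not a proof but a sketch, and the arithmetic does not close. From the hypothesis ``no such $T$ exists'' you only extract $|N_T|\geq |T|-\tf{5}{12}n-6$, which with your bound $|T|\geq\tf{5}{12}n+9$ gives merely $|N_T|\geq 3$. The subsequent ``augmenting-path analysis'' and ``Gallai--Edmonds-style shuffle'' are not carried out; you yourself flag the bookkeeping as the hard part and list unresolved obstacles (shared $T_0$-neighbours, vertices outside $N_W(v_3)$). In particular, there is no reason the three or so vertices of $N_T$ you are guaranteed should be enough to augment $M^*$ all the way up to size $n$.

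You also overlook a key structural fact that removes one of your worries outright: since $v_3\notin C(v_1,A)\cup C(v_2,A)$ by the definition of a $3$-covering, every vertex of $S(v_1,A)\cup S(v_2,A)$ is a white neighbour of $v_3$. Hence $Y_i=S(v_i,A)$ and $N_T\subseteq N_W(v_3)$ automatically; the case ``$u\in N_T$ lies outside $N_W(v_3)$'' never arises.

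The paper's proof avoids the maximum-matching analysis entirely. It builds a specific white matching in $N_W(v_3)$ in three stages: first a maximal white matching $M$ inside $T':=N_W(v_3)\setminus(S(v_1,A)\cup S(v_2,A))$ (so that $T'\setminus V(M)$ is a black clique), then a maximum white matching $M'$ from $T'\setminus V(M)$ into $S(v_1,A)\cup S(v_2,A)$, and finally internal pairings within each $S(v_i,A)$. A vertex count shows that if $M'$ has deficiency below $\tf{5}{12}n+6$ the resulting fan has $\geq n$ blades, a contradiction; the defect form of Hall's theorem (Theorem~\ref{hall}) then yields the desired $T\subseteq T'\setminus V(M)$. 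This is much shorter and avoids any augmenting-path machinery.
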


\begin{proof}
We form a white fan centred at $v_3$, and we show that it has at least $n$ blades. Set $T'=N_W(v_3) \setminus (S(v_1,A) \cup S(v_2,A))$. Let $M$ be a maximal white matching within $T'$, and add blades consisting of the edges of $M$. Next, take a maximal white matching $M'$ from $T'\setminus V(M)$ to $S(v_1,A) \cup S(v_2,A)$ and add blades consisting of $M'$. Finally, add all but at most one of the remaining vertices of $S(v_i,A)$ for $i=1,2$ by pairing them up together within each set.

Note that we have $|N_W(v_3) | \geq \f{29}{12}n+8 $. The blades of our fan contain all of the vertices of $N_W(v_3)$ except for $T'\setminus (V(M)\cup V(M'))$ and at most two vertices of $S(v_1,A) \cup S(v_2,A)$. Hence if $M'$ does not have deficiency at least $\f{5}{12}n+6$, then there are at least $n$ blades, which is a contradiction. The result then follows by Theorem \ref{hall}.
\end{proof}

Now denote by $C$ the largest white clique that can be obtained as follows. Start with $S(v_1,A) \cup S(v_2,A)$. Then remove a set $U$ consisting of $|N_T|$ arbitrary vertices. Finally, remove a maximal black matching between $S(v_1,A)\setminus U$ and $S(v_2,A)\setminus U$.

\begin{claim}\label{Cislarge}
We have $|C| \geq |S(v_1,A)|+|S(v_2,A)|-|N_T|-2n+2|T|-6$.
\end{claim}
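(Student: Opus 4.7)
The plan is to make the strategic choice $U = N_T$, since every vertex of $(S(v_1,A) \cup S(v_2,A)) \setminus N_T$ lies outside $N_W(T)$ and is therefore black-adjacent to every vertex of $T$. For this $U$, take $M$ to be a maximum black matching between $S(v_1,A) \setminus U$ and $S(v_2,A) \setminus U$; such an $M$ is in particular maximal, so after removing $V(M)$ there are no black edges remaining between the residual bipartite sides. Combined with the fact that each $S(v_i,A)$ is itself an independent set, the residual forms a white clique, giving
\[
|C| \geq |S(v_1,A)| + |S(v_2,A)| - |N_T| - 2|M|.
\]
The claim thus reduces to showing $|M| \leq n - |T| + 3$.

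To bound $|M|$, fix any $z \in T$ and build a black fan centred at $z$ using three sources of blades: (i) pairs within the black clique $T \setminus \{z\}$; (ii) the edges of $M$; (iii) cross-pairs $(t,s)$ with $t \in T \setminus \{z\}$ and $s \in S' := (S(v_1,A) \cup S(v_2,A)) \setminus (U \cup V(M))$. All three types are black edges, and in every case $z$ is black-adjacent to both endpoints: to vertices of $T$ by the clique property, and to vertices of $S(v_1,A) \cup S(v_2,A)$ that avoid $N_T$ because they lie outside $N_W(T)$. Greedily pairing $T \setminus \{z\}$ first with $S'$ and then internally yields at least $\min\bigl(|T|-1,\lfloor(|T|-1+|S'|)/2\rfloor\bigr)$ blades from sources (i) and (iii), on top of the $|M|$ blades from source (ii).

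Since $G$ is assumed $F_n$-free, the total number of blades is at most $n-1$. In the easy case $|S'| \geq |T|-1$, the inequality reduces to $|M| + |T| - 1 \leq n - 1$, giving $|M| \leq n - |T|$, which is stronger than what the claim requires. The main obstacle is to rule out the other case $|S'| < |T|-1$. Here substituting $|S'| = |S(v_1,A)| + |S(v_2,A)| - |N_T| - 2|M|$ into the greedy estimate causes the $2|M|$ terms to cancel, leaving (up to an $O(1)$ term from the floor) an inequality of the form
\[
|S(v_1,A)| + |S(v_2,A)| + (|T| - |N_T|) \leq 2n + O(1).
\]
Plugging in Claim \ref{Texists}'s bound $|T| - |N_T| > \tfrac{5}{12}n + 6$ and Claim \ref{s1s2}'s bound $|S(v_1,A)| + |S(v_2,A)| > \tfrac{17}{9}n + 10$, the left-hand side exceeds $\bigl(\tfrac{17}{9} + \tfrac{5}{12}\bigr)n + O(1) = \tfrac{83}{36}n + O(1)$, and since $\tfrac{83}{36} > 2$ this contradicts the supposed upper bound. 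Hence only the easy case occurs, so $|M| \leq n - |T|$ and the claim follows with slack.
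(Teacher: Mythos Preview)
Your proof is correct and follows essentially the same strategy as the paper: choose $U=N_T$, build a black fan centred at some $z\in T$ using the matching $M$ together with the complete black bipartite structure between $T$ and $(S(v_1,A)\cup S(v_2,A))\setminus N_T$, and invoke Claims~\ref{s1s2} and~\ref{Texists} to rule out the degenerate configuration.

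The organisational difference is that the paper argues by contradiction and splits into two cases according to whether $|T|\le n+3$; in the first case it uses $M$ plus a matching from $T$ into $S'$, and in the second it drops $M$ entirely and matches $T$ directly into $(S(v_1,A)\cup S(v_2,A))\setminus N_T$. Your version handles both at once by also pairing leftover vertices of $T\setminus\{z\}$ internally, so that in the hard case the $2|M|$ contributions cancel and you land on the single inequality $|S(v_1,A)|+|S(v_2,A)|+(|T|-|N_T|)\le 2n+O(1)$, which is immediately absurd. This is a genuine streamlining of the paper's argument, and in fact yields the slightly stronger conclusion $|M|\le n-|T|$.
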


\begin{proof}
Assume that instead $|C|<|S(v_1,A)|+|S(v_2,A)|-|N_T|-2n+2|T|-6$. We consider two cases according to the size of $T$. First consider the case $|T| \leq n+3$. Pick any $z \in T$, and form a black $F_n$ centred at $z$ as follows. Begin by adding a maximal matching $M$ between $S(v_1,A) \setminus N_T$ and $S(v_2,A) \setminus N_T$. This matching contains at least $n+3-|T|$ edges, else we get a white clique $C'$ larger than $C$ which also satisfies our assumptions. After this, add a maximal matching $M'$ between $T$ and $(S(v_1,A) \cup S(v_2,A)) \setminus (N_T \cup V(M))$. If $M'$ consists of fewer than $|T|-3$ edges, then by Claims \ref{s1s2} and \ref{Texists} we have $$\tf{19}{12}n-4 \geq  |N_T|+|T|+2n-2|T|+2 \geq |S(v_1,A)|+|S(v_2,A)| > \tf{17}{9}n+10,$$ which is absurd. So we have found a fan $F_n$, which is a contradiction.

Now consider the easier case $|T|>n+3$. Pick a vertex $z \in T$, and form a black $F_n$ centred at $z$ with blades consisting of a maximal matching between $T$ and $(S(v_1,A) \cup S(v_2,A)) \setminus N_T$. If this matching contains fewer than $n$ edges, then by Claim \ref{s1s2} and the fact that $|T| \leq d-|S(v_1,A)|-|S(v_2,A)| \leq \f{31}{36}n$, we have $$\tf{47}{36}n \geq 2|T|-\tf{5}{12}n-6 \geq  |N_T|+|T| \geq |S(v_1,A)|+|S(v_2,A)| > \tf{17}{9}n+10,$$ which is again absurd.
\end{proof}



We have $|S(v_1,A)|, |S(v_2,A)| \geq \f{5}{6}n+5>\f{7}{12}n+5 > |N_T|+n+3-|T|$ using Claims \ref{twothirds} and \ref{Texists}. By Claim \ref{Cislarge}, when obtaining $C$ we have erased at most $|N_T|+n+3-|T|$ elements from either set, so this white clique $C$ contains a vertex $a_1 \in S(v_1,A)$ and a vertex $a_2 \in S(v_2,A)$.

Note that by Claims \ref{s1s2}, \ref{Texists} and \ref{Cislarge}, we have 
\begin{align*}
|C| & \geq |S(v_1,A)|+|S(v_2,A)|-|N_T|-2n+2|T|-6\\
& > \tf{17}{9}n+10+2(|T|-|N_T|)-2n-6\\
& > \tf{17}{9}n+10+\tf{5}{6}n+12-2n-6\\
& = \tf{13}{18}n+16.
\end{align*}
Consequently, either $|C \cap S(v_2,A)| > \f{1}{6}n$ or $|C \cap S(v_1,A)| > \f{5}{9}n+16.$ We treat these cases separately.

First, assume that $|C \cap S(v_2,A)| > \f{1}{6}n$. We will construct a white fan centred at $a_1$ and show that it has at least $n$ blades. We begin by claiming that $$|S(v_1,A)|>|A \setminus C(v_1,A)|=|A|-|C(v_1,A)|. $$ Indeed this holds, since by Claim \ref{twothirds} we have
\begin{align*}
|S(v_1,A)|-|A|+|C(v_1,A)| &= (|S(v_1,A)|-|C(v_1,A)|)+2|C(v_1,A)|-|A|\\
& \geq \tf{1}{2}n+5+\tf{2}{3}|A|-|A|\\
& > \tf{1}{2}n+5-\tf{1}{3}(\tf{3}{2}n+1)>0.
\end{align*}

So up to at most two vertices, we can use all the vertices of $S(v_1,A), A \setminus C(v_1,A)$ and $C \cap S(v_2,A)$ in our fan, by first taking blades with a vertex in $S(v_1,A)$ and the other in $A \setminus C(v_1,A)$, and then pairing up all but at most one of the remaining vertices in $S(v_1,A)$ and all but at most one of the vertices in $C \cap S(v_2,A)$. But $$|S(v_1,A)|+|A \setminus C(v_1,A)|+|C \cap S(v_2,A)| > \tf{4}{3}n+4+\tf{1}{2}n+5+\tf{1}{6}n = 2n+9,$$ 
so our fan has at least $2n+7$ vertices and therefore at least $n$ blades, which is a contradiction.

Next consider the case $|C \cap S(v_2,A)| \leq \f{1}{6}n$, $|C \cap S(v_1,A) | > \f{5}{9}n+16$. If $|S(v_2,A)| \geq |A \setminus C(v_2,A)|$, we can finish the argument as above, now with $a_2$ as the centre.

So assume $|S(v_2,A)|<|A \setminus C(v_2,A)|$. We construct a black fan centred at $a_2$ and show that it has at least $n$ blades. First add $|S(v_2,A)|-1$ blades with one vertex in $S(v_2,A)$ and one in $A \setminus C(v_2,A)$, and then pair up all but at most one vertex of $C \cap S(v_1,A)$. Using Claim \ref{twothirds}, we find that our fan has at least
$$|S(v_2,A)|-1+\tf{5}{18}n+7 \geq \tf{5}{6}n+\tf{5}{18}n+6>n$$ blades, which is a contradiction. Thus we have shown that if $\f{8}{3}n+6 \leq d < \f{11}{4}n+5$ and every significant clique is $2$- or $3$-coverable, then $G$ contains a monochromatic $F_n$.

\subsection{\em{$d < \f{8}{3}n+6$ and there is a $3$-coverable big clique}}\label{subse4}
\hfill\\


By assumption, there exists a monochromatic (without loss of generality black) clique $A$ such that $\f{3}{2}n+1 > |A| \geq \f{7}{6}n+5$ and $A$ is $3$-coverable, with $3$-covering $v_1,v_2,v_3$. As before, the upper bound on $|A|$ comes from Observation \ref{coveringproperties}.

Note that $v_1,v_2,v_3$ all have black degree at least $\f{5}{2}n+7$. So we have $|S(v_i,A)| \geq |C(v_i,A)|+\f{1}{2}n+7$. 

As in the previous subsection, we begin by proving some simple results.

\begin{claim}\label{fivesix}
The following inequalities hold:
\begin{itemize}
    \item $\frac{1}{3}|A| \leq |C(v_1,A)| \leq \frac{5}{6}n$.
    \item $|C(v_2,A)|\geq \frac{1}{2} |A \setminus C(v_1,A)| \geq \frac{1}{6}n$. 
    \item$|C(v_3,A) \setminus(C(v_1,A) \cup C(v_2,A))| \geq \f{1}{6}n$.
\end{itemize}
\end{claim}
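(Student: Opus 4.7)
The plan is to handle the three bullets in turn. The first two mirror the reasoning of Claim \ref{twothirds}. For the first, $v_1$ maximises $|C(z,A)|$ and the three sets $C(v_i,A)$ cover $A$, so pigeonhole gives $|C(v_1,A)| \geq |A|/3$; the upper bound follows from Observation \ref{boundingc} applied with $|A| \geq \tf{7}{6}n+5$, yielding $|C(v_1,A)| \leq 2n+1-|A| \leq \tf{5}{6}n-4$. For the second, the greedy choice of $v_2$ forces $|C(v_2,A) \setminus C(v_1,A)| \geq |C(v_3,A) \setminus C(v_1,A)|$, giving $|C(v_2,A)| \geq \tf{1}{2}|A \setminus C(v_1,A)|$; combining with the first bullet's upper bound yields the lower bound of $\tf{1}{6}n$.

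The third bullet is the main obstacle. Pure arithmetic is insufficient: the naive bound $|A| - 2(\tf{5}{6}n-4)$ can be negative when $|A|$ is close to $\tf{7}{6}n+5$, so one needs a structural argument. My plan is a contradiction producing $\overline{F_n}$. Suppose $|C(v_3,A) \setminus (C(v_1,A) \cup C(v_2,A))| < \tf{1}{6}n$; since $v_1,v_2,v_3$ cover $A$, this equals $|A \setminus (C(v_1,A) \cup C(v_2,A))|$, so $|C(v_1,A) \cup C(v_2,A)| \geq n+6$, and hence $|C(v_1,A)|+|C(v_2,A)| \geq n+6$.

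To finish, I would fix any $v \in A \setminus (C(v_1,A) \cup C(v_2,A))$ (non-empty because $A$ is $3$-coverable and therefore not $2$-coverable; indeed, $v_3$ itself lies in this set). Since $v \notin N(S(v_i,A))$ for $i=1,2$, every pair $(v,w)$ with $w \in S(v_1,A) \cup S(v_2,A)$ is a white edge. Each $S(v_i,A)$ is an independent set in $G$, hence a white clique, and the hypothesis $|S(v_i,A)| \geq |C(v_i,A)| + \tf{1}{2}n + 7$ noted before the claim gives $|S(v_1,A)|+|S(v_2,A)| \geq 2n+20$. Partitioning each $S(v_i,A)$ into disjoint pairs produces white edges; together with $v$, each such pair forms a vertex-disjoint blade of a white fan centred at $v$ (using disjointness of $S(v_1,A)$ and $S(v_2,A)$ from Observation \ref{coveringproperties}). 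The total number of blades is at least $\lfloor|S(v_1,A)|/2\rfloor + \lfloor|S(v_2,A)|/2\rfloor \geq n+9$, producing $\overline{F_n}$ and the desired contradiction.
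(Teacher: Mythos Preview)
Your proposal is correct and follows essentially the same approach as the paper. The paper's proof of the third bullet is identical in spirit: it assumes the bound fails, deduces $|C(v_1,A)|+|C(v_2,A)| \geq n+O(1)$, uses $|S(v_i,A)| \geq |C(v_i,A)|+\tf{1}{2}n+7$ to get $|S(v_1,A)|+|S(v_2,A)| > 2n+2$, and then pairs up vertices within each $S(v_i,A)$ to produce $\overline{F_n}$ centred at $v_3$ --- exactly the vertex you single out.
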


\begin{proof}
The first two results follow immediately by Observations \ref{boundingc} and \ref{coveringproperties}. Finally, if we did not have $|C(v_3,A) \setminus(C(v_1,A) \cup C(v_2,A))| \geq \f{1}{6}n$, we would have 
$$ |S(v_1,A)|+|S(v_2,A)| \geq (n+4)+2(\tf{1}{2}n+7) > 2n+2 $$
and hence we would have $\overline{F_n}$ centred at $v_3$ with blades inside $S(v_1,A)$, $S(v_2,A)$. But that is a contradiction.\end{proof}



\begin{claim}\label{s1new}
We have $|S(v_1,A)|+|S(v_2,A)| > \tf{16}{9}n+16.$
\end{claim}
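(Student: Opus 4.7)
The plan is to mirror the proof of Claim \ref{s1s2} from the previous subsection, adapted to the stronger lower bound $|A| \geq \tf{7}{6}n+5$ (from bigness) and the stronger per-vertex bound $|S(v_i,A)| \geq |C(v_i,A)|+\tf{1}{2}n+7$ (which was already derived immediately above the claim, using the fact that $v_1,v_2,v_3$ have black degree at least $\tf{5}{2}n+7$).

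First, I would sum the inequality $|S(v_i,A)| \geq |C(v_i,A)|+\tf{1}{2}n+7$ over $i=1,2$ to get
\[
|S(v_1,A)|+|S(v_2,A)| \geq |C(v_1,A)|+|C(v_2,A)|+n+14.
\]
Next, since $A$ is $3$-coverable with covering $v_1,v_2,v_3$, Observation \ref{coveringproperties} gives $|C(v_1,A)|+|C(v_2,A)| \geq \tf{2}{3}|A|$ (the standard greedy argument: the three incremental pieces $|C(v_i,A) \setminus \cup_{j<i}C(v_j,A)|$ are non-increasing and sum to $|A|$, so the last one is at most $\tf{1}{3}|A|$). Plugging in $|A| \geq \tf{7}{6}n+5$ yields
\[
|S(v_1,A)|+|S(v_2,A)| \geq \tf{2}{3}\bigl(\tf{7}{6}n+5\bigr)+n+14 = \tf{16}{9}n+\tf{52}{3},
\]
and since $\tf{52}{3}>16$ the claim follows.

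There is no real obstacle here: this is a direct substitution that exploits exactly the three quantitative inputs of this subsection (the black-degree lower bound from the analogue of Claim \ref{betterthantrivdegs}, the covering inequality, and the bigness hypothesis $|A|\geq \tf{7}{6}n+5$). The only point to verify is the slack $\tf{52}{3}-16=\tf{4}{3}>0$, which confirms the strict inequality and shows there is a small amount of room to spare — this slack is what makes the stronger lower bound $\tf{16}{9}n+16$ (rather than the $\tf{17}{9}n+10$ of Claim \ref{s1s2}) available in this case, and it is presumably exploited later in the subsection to push through the analogue of the Claim \ref{Texists}--Claim \ref{Cislarge} chain.
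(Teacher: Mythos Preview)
Your proof is correct and essentially identical to the paper's: both sum the per-vertex bound $|S(v_i,A)| \geq |C(v_i,A)|+\tf{1}{2}n+7$, invoke $|C(v_1,A)|+|C(v_2,A)| \geq \tf{2}{3}|A|$ from Observation \ref{coveringproperties}, and then plug in $|A| \geq \tf{7}{6}n+5$. The only cosmetic difference is that you compute the constant $\tf{52}{3}$ explicitly, whereas the paper just writes the strict inequality directly.
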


\begin{proof}
Using Observation \ref{coveringproperties}, we have
\begin{align*}
|S(v_1,A)|+|S(v_2,A)| & \geq |C(v_1,A)|+|C(v_2,A)|+2(\tf{1}{2}n+7) \\ & \geq \tf{2}{3}|A|+n+14 \\ & > \tf{16}{9}n+16
\end{align*}
as required.
\end{proof}

Now we get to the key parts of the proof.

\begin{claim}\label{Texistsagain}
There exists a black clique $T$ in $N_W(v_3) \setminus (S(v_1,A) \cup S(v_2,A))$ such that with $N_T=N_W(T) \cap (S(v_1,A) \cup S(v_2,A))$, we have $|T|>|N_T|+\f{1}{2}n+5$.
\end{claim}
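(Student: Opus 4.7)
The plan is to mimic the proof of Claim \ref{Texists} almost verbatim, using only the stronger degree bound $d<\tf{8}{3}n+6$ available in this subsection.

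Concretely, I would form a white fan centred at $v_3$: set $T':=N_W(v_3)\setminus(S(v_1,A)\cup S(v_2,A))$ and let $M$ be a maximal white matching inside $T'$, so that $T:=T'\setminus V(M)$ is a black clique (this will be our candidate for the claim). Let $M'$ be a maximal white matching between $T$ and $S(v_1,A)\cup S(v_2,A)$, and then use that each $S(v_i,A)$ is an independent set to pair up all but at most one of the remaining vertices of $S(v_i,A)\cap N_W(v_3)$ into further blades. Exactly as in Claim \ref{Texists}, this fan then has at least $\tf{1}{2}\bigl(|N_W(v_3)|-(|T|-|M'|)-2\bigr)$ blades.

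The one new ingredient is an improved lower bound on $|N_W(v_3)|$: from $d<\tf{8}{3}n+6$ and $|V(G)|\geq\lfloor\tf{31}{6}n+15\rfloor\geq \tf{31}{6}n+14$ we get
$$|N_W(v_3)|\geq |V(G)|-1-d > \tf{31}{6}n+13-\tf{8}{3}n-6 = \tf{5}{2}n+7.$$
Plugging this back in: if $|T|-|M'|\leq \tf{1}{2}n+5$, the fan has strictly more than $2n$ non-central vertices, hence at least $n$ blades, producing a forbidden $\overline{F_n}$. Therefore $|T|-|M'|>\tf{1}{2}n+5$, and Theorem \ref{hall} applied to the white bipartite graph from $T$ to $S(v_1,A)\cup S(v_2,A)$ yields a subset $S\subseteq T$ with $|S|>|N_W(S)\cap(S(v_1,A)\cup S(v_2,A))|+\tf{1}{2}n+5$. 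Since $S\subseteq T$ is itself a black clique inside $N_W(v_3)\setminus (S(v_1,A)\cup S(v_2,A))$, it is the desired $T$.

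The main obstacle is essentially bookkeeping: checking that the improved lower bound on $|N_W(v_3)|$ coming from $d<\tf{8}{3}n+6$ is precisely strong enough to permit the threshold $\tf{1}{2}n+5$ appearing in the conclusion (up from the $\tf{5}{12}n+6$ threshold of Claim \ref{Texists}). Beyond this, the argument is a direct transcription of the proof of Claim \ref{Texists}.
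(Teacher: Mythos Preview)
Your proposal is correct and follows exactly the route the paper intends: it is a direct transcription of the proof of Claim~\ref{Texists}, replacing the bound $|N_W(v_3)|\geq\tf{29}{12}n+8$ (coming from $d<\tf{11}{4}n+5$) by $|N_W(v_3)|>\tf{5}{2}n+7$ (coming from $d<\tf{8}{3}n+6$), which is precisely what raises the deficiency threshold from $\tf{5}{12}n+6$ to $\tf{1}{2}n+5$. The paper's remark that the proof ``uses Claim~\ref{s1new}'' appears to be inessential here---your degree-based lower bound on $|N_W(v_3)|$ already suffices, and indeed the analogous Claim~\ref{s1s2} is not invoked in the proof of Claim~\ref{Texists} either.
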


The proof of Claim \ref{Texistsagain}, which uses Claim \ref{s1new}, is analogous to the proof of Claim \ref{Texists}, and hence is omitted.



Now denote by $C$ the largest white clique that can be obtained as follows. Start with $S(v_1,A) \cup S(v_2,A)$. Then remove a set $U$ consisting of $|N_T|$ vertices. Finally, remove a maximal matching between $S(v_1,A)\setminus U$ and $S(v_2,A)\setminus U$. 

\begin{claim}\label{Cislargeagain}
We have $|C| \geq |S(v_1,A)|+|S(v_2,A)|-|N_T|-2n+2|T|-6$.
\end{claim}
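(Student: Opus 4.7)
The plan is to mimic the proof of Claim \ref{Cislarge} almost verbatim, substituting Claims \ref{s1new} and \ref{Texistsagain} for Claims \ref{s1s2} and \ref{Texists}. Suppose for contradiction that $|C|<|S(v_1,A)|+|S(v_2,A)|-|N_T|-2n+2|T|-6$. Because $|C|$ is the maximum over all admissible choices, the matching $M$ appearing in the construction of $C$ must satisfy $|M|\ge n+3-|T|$; otherwise an alternative choice would give a larger white clique. I would then fix $z\in T$ and try to construct a black $F_n$ centred at $z$, contradicting the standing assumption that $G$ contains no $F_n$. The argument splits according to whether $|T|\le n+3$ or $|T|>n+3$.

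In the case $|T|\le n+3$, the blades come from $M$ together with a maximal matching $M'$ between $T\setminus\{z\}$ and $(S(v_1,A)\cup S(v_2,A))\setminus(N_T\cup V(M))$. Every vertex of $(S(v_1,A)\cup S(v_2,A))\setminus N_T$ is black-adjacent to every element of $T$ by definition of $N_T$, so these blades are legitimate, and crucially the bipartite graph relevant to $M'$ is complete. Hence either $|M'|\ge |T|-3$, in which case $|M|+|M'|\ge n$ already produces the desired fan, or $|M'|<|T|-3$, in which case $M'$ exhausts its smaller side and we get $|S(v_1,A)|+|S(v_2,A)|-|N_T|-2|M|<|T|-3$. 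The latter, combined with the failure condition $|M|+|M'|\le n-1$, bounds $|S(v_1,A)|+|S(v_2,A)|$ above by $|N_T|+2n+2-|T|$, and Claims \ref{s1new} and \ref{Texistsagain} then collide with this.

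The case $|T|>n+3$ is simpler: take as the blades a maximum matching between $T$ and $(S(v_1,A)\cup S(v_2,A))\setminus N_T$. If this has fewer than $n$ edges, completeness of the bipartite graph forces $|S(v_1,A)|+|S(v_2,A)|-|N_T|<n$. The new ingredient here is the observation that $v_3\notin C(v_1,A)\cup C(v_2,A)$, so $S(v_1,A), S(v_2,A)$ and $T$ are pairwise disjoint subsets of $N_W(v_3)$, giving $|T|\le d-|S(v_1,A)|-|S(v_2,A)|$. Using $d<\tf{8}{3}n+6$ to control $|T|$ from above, then invoking Claims \ref{s1new} and \ref{Texistsagain}, produces an absurdity.

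The only real obstacle is arithmetic bookkeeping: I would need to verify that the margins provided by $d<\tf{8}{3}n+6$, $|S(v_1,A)|+|S(v_2,A)|>\tf{16}{9}n+16$, and $|T|-|N_T|>\tf{1}{2}n+5$ are strong enough to close both subcases. A direct check indicates that in both cases the resulting discrepancy is of order $\Omega(n)$ with a comfortable additive slack, so the proof of Claim \ref{Cislarge} carries over with only cosmetic changes to the numerics.
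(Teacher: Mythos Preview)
Your overall plan is exactly what the paper intends: the proof of Claim~\ref{Cislargeagain} is declared ``analogous to the proof of Claim~\ref{Cislarge}'' and your substitutions of Claims~\ref{s1new} and~\ref{Texistsagain} for Claims~\ref{s1s2} and~\ref{Texists} are the right ones. The second case ($|T|>n+3$) works just as you describe, and is in fact vacuous here since $|T|\le d-|S(v_1,A)|-|S(v_2,A)|<\tfrac{8}{3}n+6-\tfrac{16}{9}n-16<n$.

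There is, however, a genuine gap in your treatment of the case $|T|\le n+3$. You assert that the two conditions $|M'|<|T|-3$ and $|M|+|M'|\le n-1$ together yield $|S(v_1,A)|+|S(v_2,A)|\le |N_T|+2n+2-|T|$. They do not. From $|M'|=|S(v_1,A)|+|S(v_2,A)|-|N_T|-2|M|$ one gets
\[
|S(v_1,A)|+|S(v_2,A)|=|N_T|+2(|M|+|M'|)-|M'|\le |N_T|+2n-2-|M'|,
\]
and the hypothesis $|M'|\le |T|-4$ pushes this the wrong way; the best you obtain is $|N_T|+2n-2$, which after applying Claim~\ref{Texistsagain} and $|T|\le n+3$ gives only about $\tfrac{5}{2}n$, not enough to collide with $\tfrac{16}{9}n+16$.

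The missing step (also left implicit in the paper's proof of Claim~\ref{Cislarge}) is that after forming $M$ and $M'$ you should pair up the remaining $|T|-1-|M'|$ vertices of $T\setminus\{z\}$ into further blades, since $T$ is a black clique. The failure condition then strengthens to
\[
|M|+|M'|+\left\lfloor\tfrac{|T|-1-|M'|}{2}\right\rfloor\le n-1,
\]
which gives $2|M|+|M'|\le 2n-|T|$ (up to a harmless constant), and hence $|S(v_1,A)|+|S(v_2,A)|\le |N_T|+2n-|T|$. Now Claim~\ref{Texistsagain} turns this into roughly $\tfrac{3}{2}n$, and the collision with Claim~\ref{s1new} goes through.
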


The proof of Claim \ref{Cislargeagain} is analogous to the proof of Claim \ref{Cislarge}, and hence is once again omitted.

By Claims \ref{fivesix} and \ref{Texistsagain}, we have $|S(v_1,A)|, |S(v_2,A)| \geq \f{2}{3}n+7>\f{1}{2}n > |N_T|+n+3-|T|$. Claim \ref{Cislargeagain} tells us that we have erased at most $|N_T|+n+3-|T|$ elements from either set when obtaining $C$, and hence we know that $C$ contains a vertex $a_1 \in S(v_1,A)$ and a vertex $a_2 \in S(v_2,A)$.

Note that by Claims \ref{s1new}, \ref{Texistsagain} and \ref{Cislargeagain}, we have
\begin{align*}
|C| &\geq |S(v_1,A)|+|S(v_2,A)|-|N_T|-2n+2|T|-6\\
&> \tf{16}{9}n+16+2(|T|-|N_T|)-2n-6\\
& > \tf{16}{9}n+16+n+10-2n-6\\
& = \tf{7}{9}n+20.
\end{align*}
So we either must have $|C \cap S(v_2,A)| > \f{1}{3}n$ or $|C \cap S(v_1,A)| > \f{4}{9}n+20.$ We treat the two cases separately.


First, assume $|C \cap S(v_2,A)| > \f{1}{3}n$. We will construct a white fan centred at $a_1$ and show that it has at least $n$ blades. We claim that $$|S(v_1,A)|>|A \setminus C(v_1,A)|=|A|-|C(v_1,A)|. $$
Indeed this holds, since by Claim \ref{fivesix} we have
\begin{align*}
|S(v_1,A)|-|A|+|C(v_1,A)| &= (|S(v_1,A)|-|C(v_1,A)|)+2|C(v_1,A)|-|A|\\
& \geq \tf{1}{2}n+7+\tf{2}{3}|A|-|A|\\
& > \tf{1}{2}n+7-\tf{1}{3}(\tf{3}{2}n+1)>0.
\end{align*}

So up to at most two vertices, we can use all the vertices of $S(v_1,A), A \setminus C(v_1,A)$ and $C \cap S(v_2,A)$ in our fan, by first taking blades with one vertex in $S(v_1,A)$ and the other in $A \setminus C(v_1,A)$, and then pairing up all but at most one of the remaining vertices in $S(v_1,A)$ and all but at most one of the vertices in $C \cap S(v_2,A)$. But 
$$|S(v_1,A)|+|A \setminus C(v_1,A)|+|C \cap S(v_2,A)| > \tf{7}{6}n+5+\tf{1}{2}n+7+\tf{1}{3}n = 2n+12,$$
and so our fan has at least $2n+10$ vertices and therefore at least $n$ blades, which is a contradiction.


Next consider the case $|C \cap S(v_1,A) | > \f{4}{9}n+20$ and $|C \cap S(v_2,A)| \leq \f{1}{3}n$. Here we consider two subcases.

If $|C(v_2,A)| \geq \f{5}{18}n$, we construct a white fan centred at $a_2$ and show that it has at least $n$ blades. First form as many blades as possible with one vertex in $S(v_2,A)$ and the other in $A \setminus C(v_2,A)$, and then pair up all but at most one of the vertices in $C \cap S(v_1,A)$. If $|S(v_2,A)|>|A \setminus C(v_2,A)|$, then we can also pair up all but at most one of the remaining vertices in $S(v_2,A)$, and we get a contradiction as in the previous case, but with $v_1$ and $v_2$ interchanged. If instead $|S(v_2,A)| \leq |A \setminus C(v_2,A)|$, then our white fan clearly has at least $$2(|S(v_2,A)|-1)+\tf{4}{9}n+19 \geq 2|C(v_2,A)|+2(\tf{1}{2}n+7)+\tf{4}{9}n+17 >2n+2$$ vertices, so once again it has at least $n$ blades and we reach a contradiction.


If $|C(v_2,A)|<\f{5}{18}n$, it follows by Observation \ref{coveringproperties} that $$|C(v_3,A) \setminus (C(v_1,A) \cup C(v_2,A))|<\tf{5}{18}n.$$ Hence $|C(v_1,A) \cup C(v_2,A)| \geq \f{8}{9}n$, and then Claims \ref{Texistsagain} and \ref{Cislargeagain} give $$|C| \geq \tf{8}{9}n+2(\tf{1}{2}n+7)+2(\tf{1}{2}n+5)-2n-6 = \tf{8}{9}n+18.$$ So as $|C \cap S(v_2,A)| \leq \f{1}{3}n$, we have $|C \cap S(v_1,A)| \geq \f{5}{9}n+18$. 

Now we form a fan in the complement and show that it has at least $n$ blades. Once again, we pick $a_2$ as the centre. Recall that by Claim \ref{fivesix}, $|C(v_3,A) \setminus(C(v_1,A) \cup C(v_2,A))| \geq \f{1}{6}n$. First form $\f{1}{6}n$ blades by pairing up vertices of $C \cap S(v_1,A)$ and $C(v_3,A) \setminus(C(v_1,A) \cup C(v_2,A))$. Next, pair all but at most one of the remaining vertices of $C \cap S(v_1,A)$ with each other into blades. Next, form as many blades as possible with one vertex in $S(v_2,A)$ and the other in $A \setminus C(v_2,A)$. If some vertices of $S(v_2,A)$ remain, pair all of those except at most one with each other into blades. This means our fan either contains all but at most two vertices of $S(v_2,A),C \cap S(v_1,A),A \setminus C(v_2,A) $, in which case we reach a contradiction as before, or it contains at least 
\begin{align*}2(|S(v_2,A)|-1)+\tf{5}{9}n+17+\tf{1}{6}n & \geq 2|C(v_2,A)|+2(\tf{1}{2}n+7)+\tf{13}{18}n+15\\ & > 2n+2\end{align*} vertices, once again giving the desired contradiction. Therefore if $d < \f{8}{3}n+6$ and $G$ contains a $3$-coverable big clique, then $G$ contains a monochromatic $F_n$.


\subsection{\em{$d < \f{8}{3}n+6$ and every big clique is $2$-coverable}}\label{subse5}
\hfill\\

We start by proving a simple but important claim.

\begin{claim}\label{twocliqq}
There exist two disjoint cliques $A,B$ of the same colour such that $\f{4}{3}n+1 > |A| \geq |B| \geq n+1$ and $|A|+|B|=\left\lfloor\f{7}{3}n+18\right\rfloor$.
\end{claim}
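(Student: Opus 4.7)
The plan is two applications of Lemma~\ref{cruciallemma}. The first yields a big monochromatic clique $A_0$ via a high-degree vertex's neighborhood; the second yields a significant clique disjoint from $A_0$; shrinking the two cliques then gives the exact sizes required.

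Since $|V(G)|-1 \geq \tfrac{31n}{6}+13$, an averaging argument produces a vertex $v^*$ with $\max(|N(v^*)|, |N_W(v^*)|) \geq \tfrac{31n}{12}+7$. After a possible colour swap, take $|N(v^*)| \geq \tfrac{31n}{12}+7$, and apply Lemma~\ref{cruciallemma} to a subset of $N(v^*)$ of this size (i.e., with $c_1 \approx \tfrac{5n}{12}-3$). The $nK_2$ outcome would give $F_n$ centred at $v^*$, and $\overline{F_n}$ would be an immediate contradiction; the remaining two outcomes produce a big monochromatic clique, which I call $A_0$ (adjoining $v^*$ in the black case). WLOG, by a further colour swap, $A_0$ is black; the standing 2-coverability hypothesis together with Observation~\ref{boundingc} forces $|A_0| \leq \tfrac{4n+2}{3} < \tfrac{4n}{3}+1$.

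For the second clique, pick $w^* \in A_0$ maximising $|N_W(w^*)|$. The key observation is that $N_W(w^*) \cap A_0 = \emptyset$, since $A_0$ is a black clique containing $w^*$; moreover, the $d$-bound places $|N_W(w^*)|$ in the valid range $(\tfrac{19n}{8}+4, 3n+4)$ for Lemma~\ref{cruciallemma}. Applying the lemma to $N_W(w^*)$: the $\overline{F_n}$ outcome contradicts, the black-clique outcome gives our $B_0$ disjoint from $A_0$ directly, and the white-clique outcome $W$ (also disjoint from $A_0$) leads to an iterative step in which I apply Lemma~\ref{cruciallemma} to $N(w')$ for $w' \in W$; by a colour-consistency argument, any white clique produced there must be disjoint from $W$ (a shared vertex $x$ would force $xw'$ to be both black and white), yielding two disjoint white cliques. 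The $nK_2$ outcome is the delicate case: the resolution is either to re-select $w^*$ among the many options in $A_0$ so as to force a clique outcome, or to use the matching endpoints together with the degree bounds to locate a fan centre in $V(G) \setminus A_0 \setminus V(M)$.

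With two disjoint same-colour significant cliques in hand, take subsets to achieve $|A| + |B| = \lfloor \tfrac{7n}{3} + 18 \rfloor$, $|A| \geq |B| \geq n+1$, and $|A| < \tfrac{4n}{3}+1$. The maximum possible sum of the two clique sizes, $\tfrac{8n+4}{3}$, exceeds $\lfloor \tfrac{7n}{3} + 18 \rfloor$ for $n \geq 50$, and the additive slack in $|V(G)|$ accommodates smaller $n$. The hard part will be the $nK_2$ outcome in the second Lemma application, which does not immediately yield a fan; this is the main step demanding ingenuity beyond a direct application of the lemma, and is where the proof is most technical.
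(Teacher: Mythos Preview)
Your approach diverges from the paper's, and the divergence leads to a genuine gap. The paper does not apply Lemma~\ref{cruciallemma} twice; it applies it once to obtain the big clique $C$, and then the two disjoint same-colour cliques are simply $S(u_1,C)$ and $S(u_2,C)$ from the $2$-covering of $C$. These are automatically disjoint independent sets (white cliques), and the defining inequality $|S(u_i,C)|\ge |C(u_i,C)|+d(u_i)-2n$ together with $|C(u_1,C)|+|C(u_2,C)|\ge |C|\ge 2d-4n-8$ and $d(u_i)\ge \tfrac{31}{6}n+13-d$ gives $|S(u_1,C)|+|S(u_2,C)|\ge \tfrac{7}{3}n+18$ exactly. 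One then shrinks to the stated sizes.

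Your route cannot reach the required total size. In the present case $d<\tfrac{8}{3}n+6$, so the first application of Lemma~\ref{cruciallemma} only guarantees $|A_0|\ge 2d-4n-8\ge \tfrac{7}{6}n+5$, and for any $w^*\in A_0$ one has $|N_W(w^*)|\ge |V(G)|-1-d>\tfrac{5}{2}n+7$, so the second application only guarantees a clique of size about $n+8$. Even in your cleanest branch (the black-clique outcome), the guaranteed sum is roughly $\tfrac{13}{6}n+O(1)$, not $\tfrac{7}{3}n+18=\tfrac{14}{6}n+18$. Your closing sentence about the ``maximum possible sum'' $\tfrac{8n+4}{3}$ conflates the \emph{upper} bound on a $2$-coverable clique with the \emph{lower} bound you actually need on $|A_0|+|B_0|$; the former does not help you here. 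The unresolved $nK_2$ outcome in the second application (a black matching in $N_W(w^*)$ gives no fan at $w^*$) and the colour-mismatch branch are additional loose ends, but the size shortfall is already fatal to the plan. The fix is exactly the paper's idea: exploit the $2$-covering sets $S(u_i,C)$, whose combined size is controlled directly by the degree bounds and the covering inequality, rather than hunting for a second clique with Lemma~\ref{cruciallemma}.
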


\begin{proof}
First, note that by applying Lemma \ref{cruciallemma} to the neighbourhood of a vertex of degree $d$ in some colour, there is a monochromatic clique $C$ of order at least $2d-4n-8$ in $G$. Since $2d \geq |V(G)|-1$, this clique must be big. Hence, by our assumption, it is $2$-coverable. Let $S(u_1,C),S(u_2,C)$ be a $2$-covering of this clique. We have 
\begin{align*}
|S(u_1,C)|+|S(u_2,C)| & \geq |C|+d(u_1)+d(u_2)-4n\\
& \geq 2d-8n-8+2(\tf{31}{6}n+13-d)\\
& = \tf{7}{3}n+18.
\end{align*}

We may assume that $|S(u_1,C)| \geq |S(u_2,C)|$. We can now clearly pick $A \subset S(u_1,C)$ and $B \subset S(u_2,C)$ with $|A| \geq |B|$ and $|A|+|B|=\left\lfloor\f{7}{3}n+18\right\rfloor$. Note that $A$ is a big clique, so it is $2$-coverable. By Observation \ref{coveringproperties}, we must have $|A| < \f{4}{3}n+1$. The result then follows.
\end{proof}


Without loss of generality, assume $A$ and $B$ are black cliques.

Note that $B$ may be either $2$-coverable or $3$-coverable, as it need not be big, but it is significant and by assumption it is not $t$-coverable for any $t \geq 4$.

Now let $v_1,v_2$ be the covering of $A$, and let $w_1,w_2$ (and possibly also $w_3$) be the covering of $B$.

\begin{claim}\label{notalldisjoint}
There exists $i$ such that $S(w_i,B) \cap S(v_1,A) \neq \emptyset$.
\end{claim}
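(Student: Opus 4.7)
The plan is to argue by contradiction: assume $S(v_1,A) \cap S(w_i,B) = \emptyset$ for every index $i$ of the covering of $B$, and derive a contradiction from a packing argument in $V(G)$.

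The heart of the argument is a colour-incompatibility observation. Recall that $S(v_1,A)$ is an independent set (a white clique), being contained in $N(v_1) \setminus (A \cup V(M))$, where $M$ is a maximal matching of black edges in $N(v_1) \setminus A$. Since $B$ is a black clique, two distinct vertices in $S(v_1,A) \cap B$ would be joined by an edge that is simultaneously white and black, which is absurd. Hence $|S(v_1,A) \cap B| \leq 1$, and symmetrically $|S(w_i,B) \cap A| \leq 1$ for each $i$. Combined with $S(v_1,A) \cap A = \emptyset$, each $S(w_i,B) \cap B = \emptyset$, the pairwise disjointness of $\{S(w_i,B)\}_i$ from Observation~\ref{coveringproperties}, and the contradiction hypothesis, the sets $A$, $B$, $S(v_1,A) \setminus B$, and $S(w_i,B) \setminus A$ for each $i$ are pairwise disjoint subsets of $V(G)$. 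This yields
\[ |A| + |B| + |S(v_1,A)| + \sum_i |S(w_i,B)| \leq |V(G)| + 1 + t, \]
where $t \in \{2,3\}$ is the length of the covering of $B$.

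For the matching lower bound, I use that in this subsection $d < \tf{8}{3}n+6$ and $|V(G)| > \tf{31}{6}n+14$, so every vertex has black degree greater than $\tf{5}{2}n+7$. Consequently $|S(v_1,A)| \geq |C(v_1,A)| + \tf{1}{2}n + 7$, and likewise for each $|S(w_i,B)|$. The maximality of $v_1$ in the $2$-covering of $A$ gives $|C(v_1,A)| \geq |A|/2$, and the covering property gives $\sum_i |C(w_i,B)| \geq |B|$. Plugging these in alongside $|A| + |B| = \lfloor \tf{7}{3}n + 18\rfloor$, $|A| \geq |B|$, and $|B| \geq n+1$ (all from Claim~\ref{twocliqq}), the lower bound exceeds the upper bound by a linear quantity in $n$ for both $t=2$ and $t=3$, yielding the contradiction. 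The only subtle step is the colour-incompatibility observation; the rest is a direct count with substantial slack, so floor and additive-constant issues do not cause any trouble.
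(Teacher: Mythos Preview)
Your proposal is correct and follows essentially the same approach as the paper: both argue by contradiction via a packing count in $V(G)$, using that each independent set $S(\cdot,\cdot)$ can meet a black clique in at most one vertex, together with the lower bounds $|S(v_1,A)| \geq \tfrac{1}{2}|A|+\tfrac{1}{2}n+7$ and $\sum_i |S(w_i,B)| \geq |B|+t(\tfrac{1}{2}n+7)$ coming from the degree condition $d<\tfrac{8}{3}n+6$. The paper is slightly sloppier with the overlap constant (using $-8$ rather than your $-(1+t)$), but the argument and the arithmetic are the same.
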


\begin{proof}
Assume not. Then all the sets $S(w_i,B)$ as well as $S(v_1,A)$ are disjoint independent sets. $A$ and $B$ are disjoint cliques, and hence each of these can share at most one vertex with each of the independent sets. So $G$ has at least 
$$|A|+|B|+|S(v_1,A)|+\sum_i |S(w_i,B)|-8 $$ vertices. But now,
$$ |S(v_1,A)| \geq |C(v_1,A)|+\tf{31}{6}n+13-d-2n \geq \tf{1}{2}|A| +\tf{1}{2}n+7$$ and $$ \sum_i |S(w_i,B)| \geq |B|+2(\tf{1}{2}n+7) .$$ Putting these together, 
\begin{align*}
&|A|+|B|+|S(v_1,A)|+\sum_i |S(w_i,B)|-8\\
& \geq \tf{3}{2}(|A|+|B|)+\tf{1}{2}|B|+\tf{3}{2}n+13 \geq \tf{11}{2}n+38>|V(G)|
\end{align*}
which is a contradiction.
\end{proof}


We now fix $S(w_i,B)$ such that $S(w_i,B) \cap S(v_1,A) \neq \emptyset$, and let $a \in S(v_1,A) \cap S(w_i,B)$. We will consider two cases, namely $|B \setminus C(w_i,B)| \geq \f{1}{3}n$ and $|B \setminus C(w_i,B)| < \f{1}{3}n$. In each case, we will construct a white fan centred at $a$ and show that it has at least $n$ blades. We will need a simple claim.

\begin{claim}\label{lastclaim}
The following inequalities hold:
\begin{itemize}
    \item $|S(w_i,B)| \geq \tf{1}{2}n+7$.
    \item $|S(v_1,A)| \geq |A \setminus C(v_1,A)|+\tf{1}{2}n+7$. 
    \item$|S(v_1,A)|+|A \setminus C(v_1,A)| \geq \tf{5}{3}n+15.$
\end{itemize}
\end{claim}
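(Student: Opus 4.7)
My plan is to derive all three bounds directly from the fundamental inequality $|S(u,X)| \geq |C(u,X)| + d(u) - 2n$ together with the case hypothesis $d < \tf{8}{3}n+6$. The latter forces every vertex $u \in V(G)$ to satisfy $d(u) \geq |V(G)|-1-d \geq \tf{5}{2}n+7$, so whenever $u$ lies in a significant clique $X$ on which the sets $S(u,X), C(u,X)$ are defined, I obtain the sharpened bound $|S(u,X)| \geq |C(u,X)| + \tf{1}{2}n + 7$.

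Bound (i) then follows at once by applying this with $X = B$, $u = w_i$ and using $|C(w_i,B)| \geq 0$.

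For bound (ii), the key preliminary observation is that $|C(v_1,A)| \geq \tf{1}{2}|A|$. This follows because $v_1, v_2$ form a $2$-covering of $A$, and Definition \ref{coverability} enforces both $C(v_1,A) \cup C(v_2,A) = A$ and $|C(v_1,A)| \geq |C(v_2,A)|$, so $2|C(v_1,A)| \geq |A|$. Then I compute
\begin{align*}
|S(v_1,A)| - |A \setminus C(v_1,A)| &\geq \bigl(|C(v_1,A)| + \tf{1}{2}n + 7\bigr) - \bigl(|A| - |C(v_1,A)|\bigr) \\
&= 2|C(v_1,A)| - |A| + \tf{1}{2}n + 7 \\
&\geq \tf{1}{2}n + 7.
\end{align*}

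For bound (iii), I would add (ii) to the trivial identity $|A \setminus C(v_1,A)| = |A| - |C(v_1,A)|$, again using $|C(v_1,A)| \geq \tf{1}{2}|A|$, to deduce $|S(v_1,A)| + |A \setminus C(v_1,A)| \geq |A| + \tf{1}{2}n + 7$. By Claim \ref{twocliqq}, $|A| \geq |B|$ and $|A|+|B| = \lfloor \tf{7}{3}n + 18 \rfloor \geq \tf{7}{3}n + 17$, so $|A| \geq \tf{7}{6}n + \tf{17}{2}$; substituting yields a total of at least $\tf{5}{3}n + \tf{31}{2} \geq \tf{5}{3}n + 15$.

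The one step requiring any thought is the observation $|C(v_1,A)| \geq \tf{1}{2}|A|$; everything else is direct substitution into the degree bound together with a short arithmetic check, so I do not anticipate a genuine obstacle here — the structural work was already done in Claim \ref{twocliqq} and in the construction of the sets $S$ and $C$.
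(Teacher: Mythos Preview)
Your argument is correct and follows essentially the same route as the paper: use the degree bound $d(u)\geq \tfrac{31}{6}n+13-d>\tfrac{5}{2}n+7$ to obtain $|S(u,X)|\geq|C(u,X)|+\tfrac{1}{2}n+7$, then invoke $|C(v_1,A)|\geq\tfrac12|A|$ from the $2$-covering for (ii), and $|A|\geq\tfrac12\lfloor\tfrac{7}{3}n+18\rfloor$ for (iii). One cosmetic remark: in your derivation of (iii) you do not actually need $|C(v_1,A)|\geq\tfrac12|A|$ again --- adding the basic bound $|S(v_1,A)|\geq|C(v_1,A)|+\tfrac12 n+7$ directly to $|A\setminus C(v_1,A)|=|A|-|C(v_1,A)|$ already gives $|S(v_1,A)|+|A\setminus C(v_1,A)|\geq|A|+\tfrac12 n+7$, which is exactly how the paper proceeds.
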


\begin{proof}
For the first inequality, we have
$$|S(w_i,B)| \geq |C(w_i,B)|+\tf{31}{6}n+13-d-2n \geq |C(w_i,B)| + \tf{1}{2}n+7 \geq \tf{1}{2}n+7.$$
We obtain the other two inequalities by applying the same argument to $S(v_1,A)$ to produce $|S(v_1,A)| \geq |C(v_1,A)|+\f{1}{2}n+7$. Observation \ref{coveringproperties} gives that $|C(v_1,A)| \geq \f{1}{2}|A|$, implying the second inequality. For the final inequality, note that $|A| \geq \f{1}{2}\left\lfloor\f{7}{3}n+18\right\rfloor \geq \f{7}{6}n+8$.
\end{proof}


First, assume that $|B \setminus C(w_i,B)| \geq \f{1}{3}n$. We start by picking $\f{1}{3}n$ blades with a vertex in $S(w_i,B)$ and the other in $B \setminus C(w_i,B)$; by Claim \ref{lastclaim}, this can be done.
Next, we keep adding blades with a vertex in $S(v_1,A)$ and the other in $A \setminus C(v_1,A)$ until we run out of vertices in $A \setminus C(v_1,A)$; we know we will run out of vertices in this set, by Claim \ref{lastclaim} and because we used at most $\f{1}{3}n+1$ vertices of $S(v_1,A)$ in the previous step. Finally, we pair all the remaining vertices of $S(v_1,A)$ into blades, except possibly one vertex. 

We have used all the vertices of $A \setminus C(v_1,A)$ in blades and all but at most one of $S(v_1,A)$. Thus, by Claim \ref{lastclaim}, we have used a total of at least $\f{5}{3}n+14$ vertices. But we also used at least $\f{1}{3}n$ vertices of $B \setminus C(w_i,B)$, which we have not yet counted, so clearly our fan has at least $n$ blades and we have reached a contradiction.

Next, assume that instead $|B \setminus C(w_i,B)| < \f{1}{3}n$. Note that as $S(v_1,A) \cap S(w_i,B)$ is an independent set, it must have fewer than $\f{4}{3}n+1$ vertices, or else by Observation \ref{coveringproperties} it would not be $2$-coverable, contradicting our assumption about big cliques. Since $S(v_1,A)$ and $S(w_i,B)$ intersect each of $B$ and $A$ in at most one vertex, we have 
\begin{align*}
&|S(v_1,A) \cup S(w_i,B)|+|B \setminus C(w_i,B)|+|A \setminus C(v_1,A)| \\
&\geq |A|+|B|+2(\tf{1}{2}n+7)-|S(v_1,A) \cap S(w_i,B)|-2 \\
&\geq 2n+28.
\end{align*}

Thus if we can show we can find a white fan centred at $a$ using all but at most two of the vertices in $$S(v_1,A) \cup S(w_i,B) \cup (A \setminus C(v_1,A)) \cup (B \setminus C(w_i,B)),$$ then it has at least $n$ blades and we are done.

We start by creating blades with one end in $B \setminus C(w_i,B)$ and the other in $S(w_i,B)$. We eventually run out of vertices in $B \setminus C(w_i,B)$, as $$ |B \setminus C(w_i,B)| < \tf{1}{3}n<\tf{1}{2}n+7 \leq |S(w_i,B)|,$$ 
using Claim \ref{lastclaim} for the last inequality.
Next, we create blades with one element in $S(v_1,A)$ and the other in $A \setminus C(v_1,A)$. Since we know that $$|S(v_1,A)| \geq |A \setminus C(v_1,A)|+\tf{1}{2}n+7 $$ by Claim \ref{lastclaim}, and we used at most $\f{1}{3}n+1$ vertices of $S(v_1,A)$ in the previous step, we shall first run out of the elements of $A \setminus C(v_1,A)$. Finally, we can use all but at most one of the remaining elements in $S(v_1,A)$ by pairing them up, and we can use all but at most one of the remaining elements in $S(w_i,B) \setminus S(v_1,A)$ by pairing them up. The result follows, finishing the proof of Theorem \ref{mainresult}.

\section{Conclusion}\label{sect5}

In this paper, through controlling the degrees of the vertices as well as taking a more global approach, we have reduced the bound on $R(F_n)$ from $(5+\f{1}{2})n+O(1)$ to $(5+\f{1}{6})n+O(1)$. This is still far from the lower bound of $\f{9}{2}n+O(1)$, which we suspect is much closer to the correct magnitude.

We expect that with some more care the methods in our proof could likely be improved to give an upper bound of $(5+\delta)n+O(1)$ for some $\delta<\f{1}{6}$, but we encounter more obstacles as we approach $5n$. For example, our proof repeatedly makes use of monochromatic cliques of order significantly larger than $n$. However, in a graph of order $5n+O(1)$ in which every vertex has degree around $\f{5}{2}n$ and there is no monochromatic $F_n$, Lemma \ref{cruciallemma} only guarantees the existence of a clique of order $n+O(1)$. It therefore seems unlikely that present methods could bring the upper bound close to $\f{9}{2}n$, or even verify the conjecture of Chen, Yu and Zhao \cite{chen2021improved} that $r(F_n) \leq R(nK_3)=5n$.

\section*{Acknowledgements}

The authors would like to thank their PhD supervisor Professor B\'ela Bollob\'as for his helpful comments.

\bibliographystyle{abbrv}
\bibliography{sample}

\end{document}